    \newtheorem{prop}{Proposition}
     \newtheorem{lem}{Lemma}
    \newtheorem{thm}{Theorem}
    \newtheorem{cor}{Corollary}
    \newtheorem{rem}{Remark}
\title{\LARGE \bf
 Low-rank approximated Kalman--Bucy filters using Oja's principal
 component flow for linear time-invariant systems 
}
\author{Daiki Tsuzuki$^{1}$ and Kentaro Ohki$^{1}$% <-this % stops a space
\thanks{*This work was supported by JSPS KAKENHI Grant Numbers JP19K03619, JP21K12097, and JP23K26126. }% <-this % stops a space
\thanks{$^{1}$Department of Applied Mathematics and Physics, Graduate School of Informatics, Kyoto University, Yoshida-Honmachi, Sakyo-ku, Kyoto 606-8501, Japan.
        {\tt\small ohki@bode.amp.i.kyoto-u.ac.jp}}%
}
\begin{document}

\maketitle
\thispagestyle{empty}
\pagestyle{empty}

%%%%%%%%%%%%%%%%%%%%%%%%%%%%%%%%%%%%%%%%%%%%%%%%%%%%%%%%%%%%%%%%%%%%%%%%%%%%%%%%
\begin{abstract}
The Kalman--Bucy filter is extensively utilized across various applications. However, its computational complexity increases significantly in large-scale systems. To mitigate this challenge, a low-rank approximated Kalman--Bucy filter was proposed, comprising Oja's principal component flow and a low-dimensional Riccati differential equation. Previously, the estimation error was confirmed solely for linear time-invariant systems with a symmetric system matrix. This study extends the application by eliminating the constraint on the symmetricity of the system matrix and describes the equilibrium points of the Oja flow along with their stability for general matrices. In addition, the domain of attraction for a set of stable equilibrium points is estimated. Based on these findings, we demonstrate that the low-rank approximated Kalman--Bucy filter with a suitable rank maintains a bounded estimation error covariance matrix if the system is controllable and observable.
\end{abstract}
%%%%%%%%%%%%%%%%%%%%%%%%%%%%%%%%%%%%%%%%%%%%%%%%%%%%%%%%%%%%%%%%%%%%%%%%%%%%%%%%

\section{Introduction}

The Kalman/Kalman--Bucy filter \cite{kalman1960new,kalman1961new} is extensively employed in various disciplines such as aerospace engineering \cite{teixeira2008spacecraft,pittelkau2001kalman}, weather forecasting \cite{cassola2012wind,kunii2012estimating,che2016wind}, and quality management \cite{senel2007kalman,lai2019iot}, for recursive state estimation from noisy and partial observations.
However, the computational demands of the Riccati equation limit the usage of the filter in high-dimensional systems. Challenges persist despite various approaches including model reduction \cite{gugercin2004survey,SHINGIN2018143} and approximations of Riccati differential equations \cite{kirsten2020order,stillfjord2018adaptive,bonnabel2012geometry,yamada2021new}.

Bonnabel et al. \cite{bonnabel2012geometry} introduced a low-rank approximated Kalman--Bucy (LRKB) filter, integrating Oja’s principal component flow (Oja flow) \cite{Oja1982SimplifiedNM} and low-dimensional Riccati differential equations. 
This filter employs a low-rank approximated covariance matrix to decrease the computational load of the Riccati equation. Although it extends to linear time-varying and nonlinear systems as an extended Kalman--Bucy filter, no conditions for bounded estimation error were specified. Furthermore, under certain conditions, the estimation error may significantly worsen.

Yamada et al. \cite{yamada2021new} subsequently refined the low-rank filter to ensure a bounded estimation error if a linear time-invariant (LTI) system is detectable and features a symmetric system $A$ matrix. {Although Yamada et al. \cite{yamada2021comparison} numerically verified that the proposed filter also maintains bounded estimation errors for a non-symmetric $A$ matrix under detectable conditions, a theoretical assurance remains absent.} The focus on symmetric matrices emerges from the analysis and utilization of the Oja flow to isolate the principal or minor components, which focus on positive definite matrices \cite{OJA198569,xu1993least,CHEN20011377,Yoshizawa2001ConvergenceAF,AbsilMahonySepulchre+2008}. {As the equilibrium points and convergence properties of the Oja flow are unresolved for the general $A$ matrix \cite{liu2022convergence, yoshizawa2023dynamical}, an analysis of the Oja flow is crucial for ensuring bounded estimate errors using the modified LRKB filter in practice.}

In this study, we eliminate the restriction on the symmetric system $A$ matrix to verify that the modified LRKB filter proposed in \cite{yamada2021new} demonstrates satisfactory performance. The contributions of this study are stated as follows:

\begin{enumerate}
\item We elucidate the explicit equilibrium points of the Oja flow for general real square matrices (Prop. \ref{prop:equili_str}) and their local stability (Theorem \ref{thm:equili_localconv}). Although disjoint equilibrium sets exist, only one set is stable. We also establish a sufficient condition for the domain of attraction of this stable equilibrium set (Theorem \ref{thm:equili_globalconv}).

\item The conditions for the bounded error covariance matrix of the modified LRKB filter are derived (Theorem \ref{thm:lrkf_rankcondi}).

\end{enumerate}

The remainder of this paper is structured as follows. The Kalman--Bucy and LRKB filters are reviewed in Section \ref{sec:previous}. Thereafter, our contributions to the analysis of the Oja flow along with the equilibrium points and their stability are presented in Section \ref{subsec:Oja equi}, and the domain of attraction for the stable equilibrium points are derived. Based on these results, the conditions for the estimation error covariance matrix to remain finite are outlined in Section \ref{subsec:bounded LKBF}. The conclusions of this study are presented in Section \ref{sec:Conclu}.

\paragraph*{Notation}
The sets of real and complex numbers are denoted by $\mathbb{R}$ and $\mathbb{C}$, respectively. The sets of $n \times m $ real and complex matrices are described by $\mathbb{R}^{n\times m}$ and $\mathbb{C}^{n \times m}$, respectively. The $n\times n$ identity matrix is denoted by $I_{n}$ and $n\times m$ zero matrix is represented by $O_{n,m}$. For $n\times m$ matrix $A$, $A^{\top}$ and $A^{\dagger}$ indicate the transposed and Hermitian conjugates of $A$, respectively. For a real symmetric matrix $A$, $A>0 (A\geq0)$ indicates that $A$ is positive (semi-)definite. For a square matrix $A \in \mathbb{C}^{n \times n}$, its eigenvalues are denoted by $\lambda_{i}(A)\quad(\mathrm{Re}(\lambda_1(A)) \ge \dots \ge \mathrm{Re}(\lambda_n(A)))$, and the corresponding eigenvectors with norm 1 are denoted by $\bm{\psi}_{i}(A)$, $i=1,\dots, n$. If the argument holds, we can simply express $\lambda _i$ and $\bm{\psi}_i$. For the degenerated eigenvalues {$\lambda _{i}$}, we utilized {$\bm{\psi} _{i}$ as generalized eigenvectors}. The Stiefel manifold is denoted as $\mathrm{St}(r,n) := \{X \in {\mathbb{R}^{n \times r}}|{X}^{\top}X=I_r\}$.

%%%%%%%%%%%%%%%%%%%%%%%%%%%%%%%%%%%%%%%%%%%%%%%%%%
\section{Preliminary: Kalman--Bucy filter and its low-rank approximation}\label{sec:previous}

\subsection{Kalman--Bucy filter}\label{subsec:KBF}

As stated in Introduction, the LRKB filter can be employed in linear time-varying or nonlinear systems. However, herein, we focus on its application only on LTI systems. 
\begin{align}
\frac{dx(t)}{dt}=&Ax(t)+Gw(t) \label{eq_x}, \\
y(t)=&Cx(t)+Hv(t)\label{eq_y},
\end{align}
where $x(t)\in\mathbb{R}^{n}$ denotes the state variable, $w(t)\in\mathbb{R}^{n}$ indicates the process noise, $y(t)\in\mathbb{R}^{p}$ represents the observation, and $v(t)\in\mathbb{R}^{p}$ denotes the observation noise. We assumed that $w(t)$ and $v(t)$ are independent white Gaussian noises with a mean of $0$ and identity covariance matrices, respectively. System matrices $A, G, C$, and $H$ are appropriately defined real matrices, and $HH^{\top}$ represents a positive-definite matrix. Throughout this study, we assume that $r \in \{ 1,\dots ,n-1\}$ exists such that $\mathrm{Re}(\lambda _{r}(A)) > \mathrm{Re}(\lambda _{r+1}(A))$. 
Then, the Kalman--Bucy filter for the system \eqref{eq_x}-\eqref{eq_y} is given as follows \cite{kalman1961new,bucy1987fsp}:
\begin{align}
\frac{d\hat{x}(t)}{dt}=&(A-\hat{P}(t){C}^{\top}{(HH^{\top})}^{-1}C)\hat{x}(t) \notag \\ 
& +\hat{P}(t){C}^{\top}{(HH^{\top})}^{-1}y(t)\label{eq_hatx},\\
\frac{d\hat{P}(t)}{dt}=&A\hat{P}(t)+\hat{P}(t){A}^{\top}+GG^{\top}\notag \\ 
&-\hat{P}(t){C}^{\top}{(HH^{\top})}^{-1}C\hat{P}(t) \label{eq_hatP}
\end{align}
with $\hat{x}(0)=\mathbb{E}[x(0)]$ and $\hat{P}(0) = \mathbb{E}[(x(0) - \hat{x}(0))(x(0)-\hat{x}(0))^{\top}]$, 
where $\hat{x}(t)\in\mathbb{R}^{n}$ denotes the estimate of $x(t)$ and its estimation error covariance matrix can be expressed as $\hat{P}(t)\in\mathbb{R}^{n\times n}$.  
The following proposition provides the steady-state solution to Equation \eqref{eq_hatP}.

\begin{prop}[{\cite[Cor. 5.4 and 5.5]{bucy1987fsp}}]\label{prop1}
If the system $(A, G, C)$ is controllable and observable, the following algebraic Riccati equation 
\begin{align*}
A\hat{P}_s+\hat{P}_sA^{\top}+GG^{\top}-\hat{P}_sC^{\top}{(HH^{\top})}^{-1}C\hat{P}_s = O_{n,n}
\end{align*}
yields a unique positive definite solution $\hat{P}_s$ and $A-\hat{P}_sC^{\top}{(HH^{\top})}^{-1}C$ is stable. Furthermore, the solution $\hat{P}(t)$ to Equation \eqref{eq_hatP} converges to $\hat{P}_s$.
\end{prop}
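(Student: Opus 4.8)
The plan is to analyze the algebraic Riccati equation (ARE) directly, writing $R := HH^{\top}$ and $S := C^{\top}R^{-1}C \ge O_{n,n}$, and to establish the claims in the order: existence of a positive semidefinite solution, positive definiteness, closed-loop stability together with uniqueness, and finally convergence of \eqref{eq_hatP}. Throughout I would exploit the fact that controllability of $(A,G)$ and observability of $(A,C)$ translate, via the PBH test, into statements about $A^{\top}$-invariant subspaces annihilated by $G^{\top}$ or by $C$.

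\textbf{Existence and boundedness.} First I would study \eqref{eq_hatP} with $\hat{P}(0)=O_{n,n}$. A comparison theorem for Riccati flows shows this solution is positive semidefinite and monotonically nondecreasing in the Loewner order: since $\hat{P}(0)=O_{n,n}\le\hat{P}(h)$ for every $h>0$ and the system is time-invariant, ordering of initial data is preserved, so $\hat{P}(t)\le\hat{P}(t+h)$. Observability of $(A,C)$ yields detectability, hence a gain $L$ with $A-LC$ stable; comparing the optimal error covariance with the covariance produced by this suboptimal stable observer gives a constant upper bound $\bar{P}\ge\hat{P}(t)$ solving a Lyapunov equation. A monotone bounded symmetric-matrix flow converges, and its limit $\hat{P}_s$ is an equilibrium of \eqref{eq_hatP}, i.e.\ a positive semidefinite solution of the ARE.

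\textbf{Positive definiteness, stability, uniqueness.} To upgrade $\hat{P}_s\ge O_{n,n}$ to $\hat{P}_s>O_{n,n}$, I would suppose $\hat{P}_sx=0$ for some $x\ne0$; the quadratic form $x^{\top}(\mathrm{ARE})x$ forces $G^{\top}x=0$, and the ARE applied to $x$ then gives $\hat{P}_sA^{\top}x=0$, so $\ker\hat{P}_s$ is $A^{\top}$-invariant and contained in $\ker G^{\top}$; an eigenvector of $A^{\top}$ in this subspace contradicts the PBH controllability test for $(A,G)$. For stability I would rewrite the ARE as the Lyapunov identity
\[
(A-\hat{P}_sS)\hat{P}_s+\hat{P}_s(A-\hat{P}_sS)^{\top}=-\bigl(GG^{\top}+\hat{P}_sS\hat{P}_s\bigr)\le O_{n,n},
\]
and test an eigenvalue $\lambda$ of $(A-\hat{P}_sS)^{\top}$ with eigenvector $x$: since $\hat{P}_s>O_{n,n}$ one obtains $2\,\mathrm{Re}(\lambda)\,x^{\dagger}\hat{P}_sx=-x^{\dagger}(GG^{\top}+\hat{P}_sS\hat{P}_s)x\le0$, and equality would again force $G^{\top}x=0$ and $A^{\top}x=\lambda x$ with $\mathrm{Re}(\lambda)=0$, contradicting controllability; hence $A-\hat{P}_sS$ is stable. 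Uniqueness then follows by subtracting the AREs for two positive definite solutions $P_1,P_2$: the difference $\Delta=P_1-P_2$ satisfies the Sylvester equation $(A-P_1S)\Delta+\Delta(A-P_2S)^{\top}=O_{n,n}$ whose coefficient matrices are both stable, so $\Delta=O_{n,n}$.

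\textbf{Convergence of $\hat{P}(t)$.} Finally I would show $\hat{P}(t)\to\hat{P}_s$ for the prescribed $\hat{P}(0)\ge O_{n,n}$. Setting $\tilde{P}=\hat{P}-\hat{P}_s$ gives $\dot{\tilde{P}}=(A-\hat{P}_sS)\tilde{P}+\tilde{P}(A-\hat{P}_sS)^{\top}-\tilde{P}S\tilde{P}$, a Riccati flow about the equilibrium $\tilde{P}=0$ whose linear part is governed by the stable matrix $A-\hat{P}_sS$. I expect this global convergence step to be the main obstacle, because monotonicity is automatic only when the initial data dominate or are dominated by $\hat{P}_s$; the general positive semidefinite case is cleanest to handle through the linear-fractional (Hamiltonian) representation $\hat{P}(t)=(X(t)\hat{P}(0)+Y(t))(Z(t)\hat{P}(0)+W(t))^{-1}$ obtained from $\exp(t\mathcal{H})$, using the stable invariant subspace of the Hamiltonian $\mathcal{H}$ associated with $\hat{P}_s$ to prove both invertibility of the denominator for all $t$ and decay of $\tilde{P}(t)$. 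Alternatively, sandwiching $\hat{P}(0)$ between $O_{n,n}$ and a large multiple of $\hat{P}_s$ and invoking the monotone convergence of the two bracketing solutions yields the same conclusion.
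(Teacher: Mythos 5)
The paper offers no proof of Proposition \ref{prop1}: it is imported as a black box from Bucy and Joseph (the cited Corollaries 5.4 and 5.5), so there is no in-paper argument to compare against. Your proposal is a correct reconstruction of the standard classical proof, and each step is the expected one: monotone-plus-bounded convergence of the zero-initialized Riccati flow for existence of a positive semidefinite solution; the kernel-invariance/PBH argument to upgrade to $\hat{P}_s>0$; the Lyapunov identity $(A-\hat{P}_sS)\hat{P}_s+\hat{P}_s(A-\hat{P}_sS)^{\top}=-(GG^{\top}+\hat{P}_sS\hat{P}_s)$ with an eigenvector test for closed-loop stability (your sign computation checks out, and the equality case correctly reduces to a PBH violation); the Sylvester equation $(A-P_1S)\Delta+\Delta(A-P_2S)^{\top}=O_{n,n}$ for uniqueness, which legitimately uses the fact that your stability argument applies to \emph{every} positive definite solution, not only the limit of the flow; and a Hamiltonian or squeeze argument for convergence from general $\hat{P}(0)\geq O_{n,n}$. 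Only two small points deserve to be made explicit if this were written out: (i) in the existence step, that the limit of the monotone flow is an equilibrium of \eqref{eq_hatP} follows because the right-hand side of \eqref{eq_hatP} converges along the trajectory, so $\dot{\hat{P}}(t)$ has a limit which must be zero for $\hat{P}(t)$ to converge; and (ii) in the bracketing version of the convergence step, the upper solution must be initialized at a matrix dominating both $\hat{P}(0)$ and $\hat{P}_s$, so that its deviation from $\hat{P}_s$ remains positive semidefinite and can be dominated by the decaying Lyapunov flow generated by the stable matrix $A-\hat{P}_sS$. Neither is a substantive gap; your outline is sound and is essentially the argument the cited reference supplies.
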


\subsection{Low-Rank approximated Kalman--Bucy filter}\label{subsec:LRKBF}
Implementing the Kalman--Bucy filter requires real-time numerical computation or offline calculation of the steady-state solution of the algebraic Riccati equation; both approaches are challenging in case of large system dimensions. To address this issue, we explore a modified LRKB filter \cite{yamada2021new}, hereafter referred to as the LRKB filter. Originating from \cite{bonnabel2012geometry}, this approach substitutes the positive (semi-)definite matrix $\hat{P}(t)$ with $\tilde{P}(t) = U(t) \tilde{R}(t) U(t)^{\top} {\in \mathbb{R}^{n\times n}}$, where $U(t) \in \mathrm{St}(r,n)$ and {$\tilde{R}(t) \in \mathbb{R}^{r\times r}$}, $\tilde{R}(t)>0$. $U(t)$ and $\tilde{R}(t)$ are the solutions of the following equations: 
\begin{align}
 \varepsilon\frac{dU(t)}{dt} =& (I_n-U(t){U}(t)^{\top})AU(t), \ U(0)^{\top}U(0)=I_r, \label{eq_U} 
 \\
  \frac{d\Tilde{R}(t)}{dt} =& A_{U(t)}\Tilde{R}(t) + \Tilde{R}(t) A_{U(t)}^{\top} + G_{U(t)}G_{U(t)}^{\top}\notag \\
 &-\tilde{R}(t) C_{U(t)}^{\top} {(HH^{\top})}^{-1}C_{U(t)}\Tilde{R}(t),\ \tilde{R}(0)>0, \label{eq_tildeR}
\end{align}
where $A_{U}\coloneqq U^{\top} A U{\in \mathbb{R}^{r\times r}}$, $C_{U} \coloneqq C U{\in \mathbb{R}^{p\times r}}$ and $G_{U} \coloneqq U^{\top}G {\in \mathbb{R}^{r\times n}}$. $\varepsilon\in(0,1]$ represents a small parameter to adjust the convergence speed of Equation \eqref{eq_U}. {Equation \eqref{eq_U} is expressed as the {\it Oja flow} \cite{OJA198569}, and $\varepsilon =1$ was set herein. 
Based on the straightforward calculation, $\frac{d}{dt}(U(t)^{\top}U(t)) =O_{r,r}$ if $U(t) \in \mathrm{St}(r,n)$ at $t\geq 0$; 
thus, $U(t) \in \mathrm{St}(r,n)$ whenever $U(0) \in \mathrm{St}(r,n)$.}
Equation \eqref{eq_tildeR} expresses an $r$-dimensional Riccati differential equation. 
The differential equation for the estimated value $\Tilde{x}(t) {\in \mathbb{R}^{n}}$ is as follows:
\begin{align}
\frac{d\Tilde{x}(t)}{dt}=&(A-\Tilde{P}(t){C}^{\top}{(HH^{\top})}^{-1}C)\Tilde{x}(t)\notag\\ 
&+{\Tilde{P}}(t){C}^{\top}{(HH^{\top})}^{-1}y(t)\label{eq_tildex}.
\end{align}
The differential equation for the error covariance matrix $\Tilde{V}(t)\coloneqq \mathbb{E}[(\Tilde{x}(t)-x(t)){(\Tilde{x}(t)-x(t))}^{\top}] {\in \mathbb{R}^{n\times n}}$ is as follows {\cite[Eq. (10)]{yamada2021comparison}}:
\begin{align}
 \frac{d\Tilde{V}(t)}{dt}=&(A-\Tilde{P}(t){C}^{\top}{(HH^{\top})}^{-1}C)\Tilde{V}(t)\notag \\
 &+\Tilde{V}(t){(A-\Tilde{P}(t){C}^{\top}{(HH^{\top})}^{-1}C)}^{\top}\notag \\
 &+GG^{\top}+\Tilde{P}(t){C}^{\top}{(HH^{\top})}^{-1}C\Tilde{P}(t) \label{eq_tildeV}.
\end{align}

Bonnabel et al. \cite{bonnabel2012geometry} employed $\lambda_{n}(GG^{\top})I_r$ instead of $G_{U(t)}G_{U(t)}^{\top} \in \mathbb{R}^{r\times r}$ in Equation \eqref{eq_tildeR}. However, when $GG^{\top}$ is singular, indicating $\lambda _{n}(GG^{\top})=0$, the estimation error of Bonnabel’s LRKB filter may increase if $(A,G)$ is controllable. To address this limitation, Yamada et al. \cite{yamada2021new} refined Bonnabel’s filter and provided conditions for bounded estimation error for LTI systems with symmetric $A$. 

The analysis was initially restrictive, { and a numerical simulation in \cite{yamada2021comparison} demonstrated that the proposed filter could ensure bounded estimation errors for a general square matrix $A$; thus, this study extends the analysis of the modified LRKB filter to general cases.} 

\section{Equilibrium points and convergence of the Oja flow}\label{subsec:Oja equi}
First, we elaborate on all the equilibrium points of the Oja flow \eqref{eq_U}. In addition to their local stability, this section describes the domains of attraction for the stable equilibrium points.

{Throughout this section, we describe the following notations: $\Psi := [\bm{\psi}_1(A), \dots, \bm{\psi}_n(A)] {\in \mathbb{C}^{n\times n}}$ and $\Lambda :={\Psi}^{-1}A\Psi \in \mathbb{C}^{n\times n}$.  $\Lambda $ denotes the Jordan form of $A$.}

\subsection{Equilibrium points of the Oja flow}\label{subsubsec:Oja equi}
We present several lemmas to examine the structure of the equilibrium points of the Oja flow \eqref{eq_U}. 
The following lemma holds true:
\begin{lem}\label{lem:K(t)_str}
Let $K \in \mathbb{C}^{n \times r}$ be $K=\Psi ^{-1}U$, where $U \in \mathrm{St}(r,n)$. Thereafter, $K$ presumes the form 
\begin{align}
K = P \begin{bmatrix} K_r \\ O_{n-r,r} \end{bmatrix}, \label{eq_K_str}
\end{align}
where $P \in \mathbb{C}^{n \times n}$ represents a unitary matrix, and $K_r \in \mathbb{C}^{r \times r}$ denotes a regular matrix. 
\end{lem}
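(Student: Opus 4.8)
The plan is to reduce the statement to the single fact that $K$ has full column rank $r$, after which a complex QR-type factorization produces the desired unitary $P$ and regular $K_r$.

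First I would establish the rank of $K$. Since $U \in \mathrm{St}(r,n)$ satisfies $U^{\top}U = I_r$, the matrix $U$ has $r$ linearly independent columns, i.e.\ $\mathrm{rank}(U) = r$. Moreover $\Psi = [\bm{\psi}_1(A), \dots, \bm{\psi}_n(A)]$ is nonsingular, since $\Lambda = \Psi^{-1}A\Psi$ is by definition the Jordan form of $A$ and presupposes the existence of $\Psi^{-1}$. Left-multiplication by an invertible matrix preserves rank, so
\begin{align*}
\mathrm{rank}(K) = \mathrm{rank}(\Psi^{-1}U) = \mathrm{rank}(U) = r,
\end{align*}
and hence $K \in \mathbb{C}^{n \times r}$ has full column rank.

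Next I would construct the factorization. The columns of $K$ span an $r$-dimensional subspace of $\mathbb{C}^n$; applying Gram--Schmidt over $\mathbb{C}$ (equivalently, a thin QR factorization) yields $Q \in \mathbb{C}^{n \times r}$ with $Q^{\dagger}Q = I_r$ together with an upper-triangular $K_r \in \mathbb{C}^{r \times r}$ satisfying $K = Q K_r$. Because both $K$ and $Q$ have rank $r$, the factor $K_r$ must be nonsingular, i.e.\ regular. I would then complete the orthonormal columns of $Q$ to an orthonormal basis of $\mathbb{C}^n$, obtaining $Q_\perp \in \mathbb{C}^{n \times (n-r)}$ such that $P := [\,Q \ \ Q_\perp\,]$ is unitary. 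With this choice,
\begin{align*}
P \begin{bmatrix} K_r \\ O_{n-r,r} \end{bmatrix} = Q K_r = K,
\end{align*}
which is exactly the asserted form \eqref{eq_K_str}.

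The argument is essentially routine linear algebra, and I do not expect a genuine obstacle. The only point requiring care is that $A$ is a general real matrix, so its (generalized) eigenvectors, and therefore $\Psi$, $K$, and the resulting factors $P$ and $K_r$, are genuinely complex-valued. This is precisely why $P$ must be taken unitary rather than real orthogonal and why the orthonormalization step is performed over $\mathbb{C}$ rather than $\mathbb{R}$.
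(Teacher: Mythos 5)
Your proof is correct and follows essentially the same two-step argument as the paper: first show $\mathrm{rank}(K)=\mathrm{rank}(\Psi^{-1}U)=r$ using the invertibility of $\Psi$ and $U^{\top}U=I_r$, then invoke a standard factorization to extract a unitary $P$ and a regular $K_r$. The only difference is that you use a thin QR factorization (completed to a full unitary) where the paper cites the singular value decomposition; both yield the form \eqref{eq_K_str} immediately, and your version has the minor advantage of spelling out why $K_r$ is nonsingular and where the unitary completion comes from.
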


\begin{proof}
 As $r$ denotes the rank of $U\in \mathrm{St}(r,n)$, the rank of $K$ is $\mathrm{rank}(K) = \mathrm{rank}({\Psi}^{-1}U) = r$. According to singular value decomposition, this statement holds. 
\end{proof}

For a unitary matrix $P\in \mathbb{C}^{n\times n}$, let $\Sigma ^{P}:=\begin{bsmallmatrix} \Sigma_{11}^{P} & \Sigma_{12}^{P} \\ \Sigma_{21}^{P} & \Sigma_{22}^{P} \end{bsmallmatrix} := {P}^{\dagger}{\Psi}^{\dagger}\Psi P $, where $\Sigma_{11}^{P} \in \mathbb{C}^{r \times r}$, $\Sigma_{12}^{P}=(\Sigma _{21}^{P})^{\dagger} \in \mathbb{C}^{r\times (n-r)}$, and $\Sigma_{22}^{P} \in \mathbb{C}^{(n-r) \times (n-r)}$. If $P=I_{n}$, the superscript is omitted.  
Then, the following lemma holds for $K_r$ {of \eqref{eq_K_str}}.

\begin{lem}\label{lem:K_r(t)_str}
Let $K_r {\in \mathbb{C}^{r\times r}}$ be of Lemma \ref{lem:K(t)_str}. Then, $K_{r}$ is represented by {$\Sigma_{11}^{P}$ and} a unitary matrix $W_{{P}}\in \mathbb{C}^{r\times r}$ such that $K_r = (\Sigma_{11}^{P})^{-1/2}W_{{P}}$. 
For any orthogonal matrix $W \in \mathbb{R}^{r\times r}$, the following equation also satisfies \eqref{eq_K_str}: $K_r = (\Sigma_{11}^{P})^{-1/2}W_{{P}} W$. 
\end{lem}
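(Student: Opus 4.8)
The plan is to turn the orthonormality constraint on $U$ into an equivalent condition on the block $K_r$, and then strip off a positive-definite square root. First I would write $U = \Psi K$ and note that, because $U$ is real, $U^{\top}U = U^{\dagger}U$; substituting gives $K^{\dagger}\Psi^{\dagger}\Psi K = I_r$. Inserting the block form $K = P\begin{bsmallmatrix} K_r \\ O_{n-r,r}\end{bsmallmatrix}$ from Lemma \ref{lem:K(t)_str} and using $\Sigma^{P} = P^{\dagger}\Psi^{\dagger}\Psi P$, the zero block annihilates the off-diagonal contributions, so the constraint collapses to $K_r^{\dagger}\Sigma_{11}^{P}K_r = I_r$.

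The next step is to extract the square root. Since $\Psi$ is invertible, $\Psi^{\dagger}\Psi > 0$, and unitary conjugation by $P$ preserves positive-definiteness, so $\Sigma^{P} > 0$; its leading principal block $\Sigma_{11}^{P}$ is therefore Hermitian positive-definite and admits a unique positive-definite Hermitian square root $(\Sigma_{11}^{P})^{1/2}$, which is invertible. Writing $\Sigma_{11}^{P} = (\Sigma_{11}^{P})^{1/2}(\Sigma_{11}^{P})^{1/2}$, the constraint reads $\bigl((\Sigma_{11}^{P})^{1/2}K_r\bigr)^{\dagger}\bigl((\Sigma_{11}^{P})^{1/2}K_r\bigr) = I_r$, so $W_{P} := (\Sigma_{11}^{P})^{1/2}K_r$ is unitary and $K_r = (\Sigma_{11}^{P})^{-1/2}W_{P}$, which establishes the first claim.

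For the second claim, the idea is that right-multiplying $U$ by a real orthogonal matrix keeps the point inside $\mathrm{St}(r,n)$. Given $W \in \mathbb{R}^{r\times r}$ orthogonal, $UW$ is real and $(UW)^{\top}(UW) = W^{\top}W = I_r$, so $UW \in \mathrm{St}(r,n)$; moreover $\Psi^{-1}(UW) = KW = P\begin{bsmallmatrix} K_r W \\ O_{n-r,r}\end{bsmallmatrix}$, so the associated block is $K_r W = (\Sigma_{11}^{P})^{-1/2}W_{P}W$. Since $W_{P}W$ is again unitary, this is once more of the form guaranteed by the first claim and hence satisfies \eqref{eq_K_str}.

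I expect the main obstacle to be the careful bookkeeping between the transpose appearing in the Stiefel constraint and the Hermitian conjugate defining $\Sigma^{P}$: the identification $U^{\top}U = U^{\dagger}U$ relies on $U$ being real, and it is precisely this reality requirement that forces $W$ to be a real orthogonal (rather than a general complex unitary) matrix in the second claim, since a complex $W$ would destroy the reality of $UW$.
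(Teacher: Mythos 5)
Your proof is correct and follows essentially the same route as the paper: both reduce the Stiefel constraint to $K_r^{\dagger}\Sigma_{11}^{P}K_r = I_r$ and then peel off the positive-definite square root, and both handle the second claim by noting that right-multiplication by a real orthogonal $W$ preserves membership in $\mathrm{St}(r,n)$. The only cosmetic difference is that the paper invokes the polar decomposition $K_r = (K_rK_r^{\dagger})^{1/2}W_P$ and then identifies $(K_rK_r^{\dagger})^{1/2} = (\Sigma_{11}^{P})^{-1/2}$, whereas you define $W_P := (\Sigma_{11}^{P})^{1/2}K_r$ directly and verify its unitarity, which is a marginally cleaner way of saying the same thing.
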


\begin{proof}
From polar decomposition, a unitary matrix $W_{{P}}$ exists such that $K_{r} = (K_{r}K_{r}^{\dagger})^{1/2} W_{{P}}$.  
Because $U^{\top}U = U^{\dagger}U = I_r$, then $\begin{bsmallmatrix} K_{r}^{\dagger} && O_{r,n-r} \end{bsmallmatrix} {P}^{\dagger}{\Psi}^{\dagger}\Psi P \begin{bsmallmatrix} {K_r} \\ O_{n-r,r} \end{bsmallmatrix} = I_r$. Therefore, $K_{r}^{\dagger} \Sigma_{11}^{P} K_r = I_r$ and $K_{r}=(\Sigma_{11}^{P})^{-1/2}W_{{P}}$. 
Because $U\in \mathrm{St}(r,n) \subset \mathbb{R}^{n\times r}$, $K_{r}=(\Sigma_{11}^{P})^{-1/2}W_{{P}}W$ for any orthogonal matrix $W \in \mathbb{R}^{r\times r}$ always satisfies $U^{\top}U=I_{r}$.  
Therefore, the statement holds.  
\end{proof}

Using Lemmas \ref{lem:K(t)_str} and \ref{lem:K_r(t)_str}, the following proposition holds for the equilibrium points of the Oja flow \eqref{eq_U} with a real square matrix $A$. 
\begin{prop}\label{prop:equili_str}
The equilibrium point is expressed as $\bar{U}_{P} = \Psi P \begin{bsmallmatrix} K_r \\ O_{n-r,r} \end{bsmallmatrix}\in\mathrm{St}(r, n)$, where $P \in \mathbb{R}^{n \times n}$ denotes a permutation matrix and $K_r \in \mathbb{C}^{r \times r}$ represents the form in Lemma \ref{lem:K_r(t)_str}. 
Furthermore, any $\bar{U}_{P}^{\prime}$ of $\mathcal{U}_{P} := \{ \bar{U}_{P}W \ | \ W \in \mathbb{R}^{r\times r}, \ W^{\top}W=I_{r} \}$ is an equilibrium point. 
\end{prop}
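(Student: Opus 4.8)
The plan is to recast the equilibrium condition as an invariance property of $\mathrm{Range}(U)$ under $A$, and then to confirm the claimed form by a short computation in the eigenbasis built from Lemmas \ref{lem:K(t)_str} and \ref{lem:K_r(t)_str}.

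First I would set the right-hand side of \eqref{eq_U} to zero, giving $(I_n - UU^{\top})AU = O_{n,r}$. Since $U \in \mathrm{St}(r,n)$, the matrix $UU^{\top}$ is the orthogonal projector onto $\mathrm{Range}(U)$, so this is equivalent to $AU = U(U^{\top}AU)$, i.e.\ $\mathrm{Range}(U)$ is $A$-invariant. Writing $K = \Psi^{-1}U$ and using $A\Psi = \Psi\Lambda$ together with $U^{\top} = U^{\dagger} = K^{\dagger}\Psi^{\dagger}$, the condition transports to $\Lambda K = KK^{\dagger}\Sigma\Lambda K$, where $\Sigma = \Psi^{\dagger}\Psi$.

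Next I would substitute $K = P\begin{bsmallmatrix} K_r \\ O_{n-r,r}\end{bsmallmatrix}$ from Lemma \ref{lem:K(t)_str} with $P$ a permutation matrix. Because $P^{\top}\Lambda P$ only permutes the Jordan structure, for a permutation that keeps each Jordan chain on one side of the index $r$ (automatic when $A$ is diagonalizable) the product $\Lambda P\begin{bsmallmatrix} K_r \\ O_{n-r,r}\end{bsmallmatrix}$ retains its support in the first $r$ rows, so the equilibrium identity reduces to its leading $r\times r$ block, namely $\Lambda_r K_r = K_rK_r^{\dagger}\Sigma_{11}^{P}\Lambda_r K_r$ with $\Lambda_r$ the top block of $P^{\top}\Lambda P$. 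The decisive simplification is the Stiefel constraint $K_r^{\dagger}\Sigma_{11}^{P}K_r = I_r$ proved in Lemma \ref{lem:K_r(t)_str}, which yields $K_rK_r^{\dagger}\Sigma_{11}^{P} = I_r$ and collapses the right-hand side exactly onto the left-hand side. Hence $\bar{U}_{P}$ solves the equilibrium equation, and Lemma \ref{lem:K_r(t)_str} simultaneously certifies $\bar{U}_{P}\in\mathrm{St}(r,n)$.

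For the orbit claim I would take $\bar{U}_{P}^{\prime} = \bar{U}_{P}W$ with $W^{\top}W = I_r$. Then $(\bar{U}_{P}W)^{\top}(\bar{U}_{P}W) = W^{\top}I_rW = I_r$, so $\bar{U}_{P}^{\prime}\in\mathrm{St}(r,n)$, while $\mathrm{Range}(\bar{U}_{P}^{\prime}) = \mathrm{Range}(\bar{U}_{P})$ because $W$ is invertible; since the equilibrium condition depends on $U$ only through its range, every $\bar{U}_{P}^{\prime}$ is again an equilibrium. Equivalently, $K_rW$ still satisfies $(K_rW)^{\dagger}\Sigma_{11}^{P}(K_rW) = I_r$, so the same collapse applies verbatim. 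I expect the main obstacle to be the bookkeeping forced by the complex, non-unitary eigenbasis: because $\Sigma = \Psi^{\dagger}\Psi \neq I_n$, the projector $UU^{\top}$ does not become a clean projector in the $K$-coordinates, and one must lean on the identity $K_r^{\dagger}\Sigma_{11}^{P}K_r = I_r$ rather than on orthogonality of $\Psi$. A secondary subtlety is the reality of $\bar{U}_{P}$ despite complex $\Psi$ and $K_r$: for complex-conjugate eigenvalue pairs this is exactly the freedom absorbed by the real orthogonal factor $W$ of Lemma \ref{lem:K_r(t)_str}, and for defective $A$ it forces one to discard permutations that break a Jordan chain across the index $r$.
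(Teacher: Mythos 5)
Your verification that matrices of the stated form are equilibria is correct and follows essentially the same route as the paper: transport the condition $(I_n-UU^{\top})AU=O_{n,r}$ into the eigenbasis via $K=\Psi^{-1}U$, and collapse the right-hand side using the Stiefel identity $K_r^{\dagger}\Sigma_{11}^{P}K_r=I_r$ from Lemma \ref{lem:K_r(t)_str}. The range-invariance framing of the orthogonal-orbit claim is a slightly cleaner way to see it than the paper's renewed appeal to Lemma \ref{lem:K_r(t)_str}, but it is the same computation, and your remark that a permutation must not split a Jordan chain across the index $r$ matches the paper's block-diagonality requirement on $P^{\dagger}\Lambda P$.

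The gap is in the converse direction, which is what the proposition actually asserts (``the equilibrium point is expressed as\dots''): you take $P$ to be a permutation matrix from the outset, but Lemma \ref{lem:K(t)_str} only supplies a general unitary $P$ coming from the singular value decomposition of $K$. The substantive content of the proposition is that, for $\bar{U}_{P}$ to be an equilibrium, this unitary may be replaced by a permutation. The paper's proof handles this by arguing that the equilibrium identity forces $\Lambda_{P}=P^{\dagger}\Lambda P$ to be block diagonal, so that $P=P_{\rm per}P_{\rm BD}$ with $P_{\rm BD}=\mathrm{block}\mbox{-}\mathrm{diag}[P_{r},P_{\perp}]$, and then absorbs $P_{r}$ into $K_{r}$ via the identity $(\Sigma_{11}^{P})^{-1/2}=P_{r}^{\dagger}(\Sigma_{11}^{P_{\rm per}})^{-1/2}P_{r}$. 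Without some version of this step your argument only shows that the listed matrices are equilibria, not that every equilibrium is of this form --- and the classification is what Theorem \ref{thm:equili_localconv} later leans on when it declares every equilibrium set other than $\mathcal{U}$ unstable. (To be fair, the paper's own necessity argument is itself stated only as a sufficiency, ``the equality holds if $\Lambda_{P}$ is block diagonal,'' so a fully rigorous converse needs more care on either side; but your write-up skips the issue entirely by assuming $P$ is a permutation.)
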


We express $\bar{U}$ and $\mathcal{U}$ if $P=I_{n}$. { Note that there exists a unitary $P$ such that $\mathcal{U}_{P} = \mathcal{U}$; refer to the following proof. }

\begin{proof}
From Lemma \ref{lem:K(t)_str}, {for any $\bar{U}_{P} \in \mathrm{St}(r,n)$ there exist unitary $P\in\mathbb{C}^{n\times n}$ and regular $K_{r}\in \mathbb{C}^{r\times r}$ such that} $\bar{U}_{P} = \Psi P \begin{bsmallmatrix} K_r \\ O_{n-r,r} \end{bsmallmatrix}$. Let $\Lambda_P := P^{\dagger}\Lambda P {\in \mathbb{C}^{n\times n}} $. 
{To satisfy $\bar{U}_{P}$ to be an equilibrium of Eq. \eqref{eq_U}, }
\begin{align*}
0=& (I_n-\bar{U}_{P}{\bar{U}_{P}}^{\top})A\bar{U}_{P} \\
=&\Psi P \left( \Lambda_P - \begin{bmatrix} {({\Sigma_{11}^{P}})^{-1}} & O_{r,n-r} \\ O_{n-r,r} &O_{n-r,n-r} \end{bmatrix} 
\Sigma ^{P} \Lambda_P \right) 
\begin{bmatrix} K_r \\ O_{n-r,r} \end{bmatrix}
\end{align*}
{needs to hold.  Here, we use Lemma \ref{lem:K_r(t)_str} in the second equality.} 
As $\Psi P$ is a regular matrix, the following equation holds:
\begin{align*}
0&=\left( \Lambda_P - \begin{bmatrix} I_r & ({\Sigma_{11}^{P}})^{-1}{\Sigma_{12}^{P}}\\ O_{n-r,r} & O_{n-r,n-r} \end{bmatrix}\Lambda_P \right) 
\begin{bmatrix} K_r \\ O_{n-r,r} \end{bmatrix}.
\end{align*}
The equality holds if $\Lambda_P$ denotes a block diagonal matrix. 
To satisfy the block diagonality of $\Lambda_P$, a unitary matrix $P$ should be the product of permutation matrix $P_{\rm per}$ and block diagonal unitary matrix $P_{\rm BD} = \mathrm{block}\mbox{-}\mathrm{diag}[P_{r}, P_{\perp}]${; $P=P_{\rm per}P_{\rm BD}$}, where $P_{r}\in \mathbb{C}^{r\times r}$ and $P_{\perp}\in \mathbb{C}^{(n-r)\times (n-r)}$ are unitary matrices. 
{ As $(\Sigma _{11}^{P})^{-1/2} = P_{r}^{\dagger} (\Sigma _{11}^{P_{\rm per}})^{-1/2} P_{r}$, $P\begin{bsmallmatrix} (\Sigma _{11}^{P})^{-1/2} W_{P} \\ O_{n-r,r} \end{bsmallmatrix} = P_{\rm per} \begin{bsmallmatrix} (\Sigma _{11}^{P_{\rm per} })^{-1/2} P_{r}W_{P} \\ O_{n-r,r} \end{bsmallmatrix}$ holds, which enables us to} rewrite $P_{r}K_{r}$ with $K_{r}$; thus, matrix $P$ is a permutation matrix without loss of generality. 
From Lemma \ref{lem:K_r(t)_str}, if $\bar{U}_{P}$ is also an equilibrium point, then for any orthogonal matrix $W\in \mathbb{R}^{r\times r}$, $\bar{U}_{P}W$ is also an equilibrium point.  
The proof is completed.
\end{proof}

\begin{rem}
In \cite{bonnabel2012geometry}, a stable equilibrium set for a general $A$ matrix was characterized by $\bm{\psi}_{i}(A+A^{\top})$, $i=1,\dots ,r$.  
Proposition \ref{prop:equili_str} provides an alternative and direct characterization of the equilibrium sets of the Oja flow. 
Note that Proposition \ref{prop:equili_str} does not deny the existence of other invariant sets such as limit cycles.  
\end{rem}

Any $\bar{U}_{P} \in \mathcal{U}_{P}$ retains certain eigenvalues of $A$.  

\begin{prop}\label{prop:UTAUeigen}
Let $P{\in\mathbb{R}^{n\times n}}$ be any permutation matrix and $\bar{U}_{P} \in \mathcal{U}_{P}$. 
{Then, $ \{ \lambda_i({\bar{U}_{P}}^{\top}A\bar{U}_{P}) \} _{i=1}^{r} = \{ \lambda_{\mathcal{I}_{P}(i)}(A)\} _{i=1}^{r} $, where $\mathcal{I}_P$ is a permutation related to $P$.}
\end{prop}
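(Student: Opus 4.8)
The plan is to compute $\bar{U}_{P}^{\top}A\bar{U}_{P}$ explicitly from the structural description of the equilibria in Proposition \ref{prop:equili_str} and Lemma \ref{lem:K_r(t)_str}, and then exhibit it as similar to the leading $r\times r$ block of the permuted Jordan form. Since $\bar{U}_{P}\in \mathrm{St}(r,n)\subset \mathbb{R}^{n\times r}$ is real and $A$ is real, I may freely replace $\bar{U}_{P}^{\top}$ by $\bar{U}_{P}^{\dagger}$, which is what lets me route the whole computation through the Hermitian quantities $\Sigma^{P}=P^{\dagger}\Psi^{\dagger}\Psi P$ and $\Lambda_{P}=P^{\dagger}\Lambda P$ already introduced in the proof of Proposition \ref{prop:equili_str}.

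First I would substitute $A=\Psi\Lambda\Psi^{-1}$ and $\bar{U}_{P}=\Psi P\begin{bsmallmatrix} K_r \\ O_{n-r,r}\end{bsmallmatrix}$, use $\Psi^{\dagger}A\Psi=\Psi^{\dagger}\Psi\Lambda$ (from $A\Psi=\Psi\Lambda$), and insert $PP^{\dagger}=I_n$ to obtain
\begin{align*}
\bar{U}_{P}^{\top}A\bar{U}_{P}=\begin{bmatrix} K_r^{\dagger} & O_{r,n-r}\end{bmatrix}\,\Sigma^{P}\Lambda_{P}\begin{bmatrix} K_r \\ O_{n-r,r}\end{bmatrix}.
\end{align*}
By the proof of Proposition \ref{prop:equili_str}, $P$ is a permutation for which $\Lambda_{P}=\mathrm{block}\mbox{-}\mathrm{diag}[\Lambda_{11}^{P},\Lambda_{22}^{P}]$ is block diagonal with $\Lambda_{11}^{P}\in\mathbb{C}^{r\times r}$. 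Carrying out the block multiplication, only the $(1,1)$ block survives the outer contraction against $\begin{bsmallmatrix} K_r \\ O\end{bsmallmatrix}$ and its conjugate transpose, collapsing the expression to $K_r^{\dagger}\Sigma_{11}^{P}\Lambda_{11}^{P}K_r$.

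Next I would insert $K_r=(\Sigma_{11}^{P})^{-1/2}V$ with $V$ unitary (this is $W_{P}W$ of Lemma \ref{lem:K_r(t)_str}; the square root exists because $\Sigma_{11}^{P}$ is the leading principal submatrix of the positive definite $\Sigma^{P}$, hence Hermitian positive definite). This yields
\begin{align*}
\bar{U}_{P}^{\top}A\bar{U}_{P}=V^{\dagger}(\Sigma_{11}^{P})^{1/2}\Lambda_{11}^{P}(\Sigma_{11}^{P})^{-1/2}V,
\end{align*}
which is a composition of two similarity transformations of $\Lambda_{11}^{P}$ (one by $(\Sigma_{11}^{P})^{1/2}$, one by the unitary $V$). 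Therefore $\bar{U}_{P}^{\top}A\bar{U}_{P}$ is similar to $\Lambda_{11}^{P}$ and shares its spectrum. Because $P$ is a permutation that does not split any Jordan block, $\Lambda_{11}^{P}$ is a direct sum of complete Jordan blocks of $\Lambda$, so its eigenvalues are exactly $r$ of the diagonal entries of $\Lambda$, namely $\{\lambda_{\mathcal{I}_{P}(i)}(A)\}_{i=1}^{r}$ for the index selection $\mathcal{I}_{P}$ induced by $P$, which gives the claim.

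I expect the only delicate points to be bookkeeping rather than substance: keeping track that $\bar{U}_{P}$ is real so that $\top$ and $\dagger$ coincide (allowing the use of $\Sigma^{P}$), and confirming $\Sigma_{11}^{P}>0$ so that $(\Sigma_{11}^{P})^{\pm 1/2}$ are well defined. I would also note that the orthogonal freedom $W$ in $\mathcal{U}_{P}$ is harmless: replacing $\bar{U}_{P}$ by $\bar{U}_{P}W$ conjugates $\bar{U}_{P}^{\top}A\bar{U}_{P}$ by the real orthogonal $W$ and hence preserves the spectrum, and in any case this freedom is already absorbed into the unitary $V$ above.
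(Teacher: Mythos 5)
Your proposal is correct and follows essentially the same route as the paper: both reduce $\bar{U}_{P}^{\top}A\bar{U}_{P}$ to a similarity transform of the leading $r\times r$ block of the block-diagonalized $P^{\top}\Lambda P$, whence the spectrum is the corresponding selection of eigenvalues of $A$. The only cosmetic difference is that the paper telescopes via $K_r^{\dagger}\Psi_{P,r}^{\dagger}\Psi_{P,r}K_r=I_r$ to get $K_r^{-1}\Lambda_{P,r}K_r$ directly, whereas you substitute the polar form $K_r=(\Sigma_{11}^{P})^{-1/2}V$ and arrive at the same conjugate.
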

\begin{proof}
Let ${{\Psi}_{P,r}} := \Psi P\begin{bsmallmatrix} I_r \\ O_{n-r,r} \end{bsmallmatrix} \in \mathbb{C}^{n \times r}$, ${{\Lambda}_{P,r}} := \begin{bsmallmatrix} I_r && O_{r,n-r} \end{bsmallmatrix} P^{\top}\Lambda P\begin{bsmallmatrix} I_r \\ O_{n-r,r} \end{bsmallmatrix}\in \mathbb{C}^{r \times r}$. 
Thereafter, $\bar{U}_{P}^{\top}\bar{U}_{P} = K_{r}^{\dagger}{{\Psi}_{P,r}^{\dagger}} {{\Psi}_{P,r}}K_r=I_r$. {As $K_r \in \mathbb{C}^{r\times r}$ is regular, $K_{r}^{-1}$ exists. Therefore, we can obtain the following equation:}
{\begin{align*}
\bar{U}_{P}^{\top} A \bar{U}_{P} 
= K_{r}^{\dagger}{{\Psi}_{P,r}^{\dagger}}{{\Psi}_{P,r}} K_r K_{r}^{-1}{{\Lambda}_{P,r}}  K_r
=K_{r}^{-1} {{\Lambda}_{P,r}}  K_r. 
\end{align*}}
As the similarity transformation preserves the eigenvalues, {$\{ \lambda_i({\bar{U}_{P}}^{\top} A \bar{U}_{P}) \}_{i=1}^{r} = \{ \lambda_{\mathcal{I}_{P}(i)}(A) \} _{i=1}^{r}$} holds.
\end{proof}

The preservation of the part of the eigenvalues of $A$ is crucial for analyzing the controllability and observability of the reduced system matrices in Props. \ref{prop:lrde_observe} and \ref{prop:lrde_reach}.

\subsection{Convergence and domain of attraction of the Oja flow}\label{subsubsec:Oja conver}
In this section, we analyze the local stability of each $\mathcal{U}_{P}$ defined in Prop. \ref{prop:equili_str} and the domain of attraction of the Oja flow. 
Accordingly, we set $K(t)=\begin{bsmallmatrix}
K_r(t) \\ K_{\perp}(t)    
\end{bsmallmatrix} = \Psi ^{-1} U(t),$
where $K_r(t) \in \mathbb{C}^{r \times r}, K_{\perp}(t)\in\mathbb{C}^{(n-r) \times r}$.  

\begin{thm}\label{thm:equili_localconv}
The set $\mathcal{U}$ is locally asymptotically stable. Furthermore, any equilibrium set $\mathcal{U}_{P} \neq \mathcal{U}$ is unstable.
\end{thm}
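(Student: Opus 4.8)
The plan is to run the analysis in the coordinates $K = \Psi^{-1}U$ already introduced, in which each equilibrium set sits in a coordinate subspace. Substituting $U=\Psi K$ into \eqref{eq_U} and using that $U$ is real (so $U^{\top}=U^{\dagger}=K^{\dagger}\Psi^{\dagger}$ and $\Psi^{\dagger}A\Psi=\Sigma\Lambda$) produces the closed ODE $\dot{K}=\Lambda K - K\,(K^{\dagger}\Sigma\Lambda K)$. Writing $Q:=K^{\dagger}\Sigma\Lambda K\in\mathbb{C}^{r\times r}$ and partitioning $\Lambda$ into its leading block $\Lambda_r$ and trailing block $\Lambda_{\perp}$ (decoupled by the gap $\mathrm{Re}(\lambda_r)>\mathrm{Re}(\lambda_{r+1})$, which prevents a Jordan block from straddling the partition), the flow splits as $\dot{K}_r=\Lambda_r K_r-K_r Q$ and $\dot{K}_{\perp}=\Lambda_{\perp}K_{\perp}-K_{\perp}Q$. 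On $\mathcal{U}$ one has $K_{\perp}=0$, and $\bar{Q}=\bar{K}_r^{\dagger}\Sigma_{11}\Lambda_r\bar{K}_r=\bar{K}_r^{-1}\Lambda_r\bar{K}_r$ using $\bar{K}_r^{\dagger}\Sigma_{11}\bar{K}_r=I_r$; hence the spectrum of $\bar{Q}$ is the retained set of eigenvalues $\{\lambda_1,\dots,\lambda_r\}$ of $A$.

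Next I would record that, by the $O(r)$ right-symmetry of \eqref{eq_U}, each set $\mathcal{U}_{P}$ is exactly the collection of orthonormal frames of the invariant subspace it spans, so every perturbation keeping $K_{\perp}=0$ and respecting $U^{\top}U=I_r$ stays inside $\mathcal{U}_{P}$ and is neutral. The genuinely transverse directions are therefore carried entirely by $K_{\perp}$, and stability of the set reduces to the transverse $K_{\perp}$-dynamics. Linearizing $\dot{K}_{\perp}=\Lambda_{\perp}K_{\perp}-K_{\perp}Q$ at $K_{\perp}=0$ (the $K_{\perp}$-dependence of $Q$ enters only at second order, since $K_{\perp}$ already multiplies those terms) yields the Sylvester operator $\delta\dot{K}_{\perp}=\Lambda_{\perp}\,\delta K_{\perp}-\delta K_{\perp}\,\bar{Q}$, whose spectrum is $\{\lambda_a(A)-\lambda_b(A)\}$ with $a$ ranging over the perpendicular indices and $b$ over the retained indices.

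For $\mathcal{U}$ the retained indices are $\{1,\dots,r\}$, so every transverse eigenvalue has real part at most $\mathrm{Re}(\lambda_{r+1})-\mathrm{Re}(\lambda_{r})<0$; the set is normally hyperbolic with all transverse modes contracting, which gives local asymptotic stability once the reduction argument below is in place. For $\mathcal{U}_{P}\neq\mathcal{U}$, the retained index set $S$ must omit some $j_0\le r$ and contain some $k_0>r$; choosing the perpendicular index $a=j_0$ and retained index $b=k_0$ gives the transverse eigenvalue $\lambda_{j_0}-\lambda_{k_0}$ with real part at least $\mathrm{Re}(\lambda_{r})-\mathrm{Re}(\lambda_{r+1})>0$, exhibiting a growing transverse direction, so $\mathcal{U}_{P}$ is unstable.

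The main obstacle will be the two reductions that make the Sylvester computation legitimate. First, I must justify that contraction of the transverse linearization really implies asymptotic stability of the whole manifold $\mathcal{U}$: the orbit supplies a full family of neutral directions, so a normally-hyperbolic / center-manifold argument is needed rather than a naive spectral-gap-of-a-point argument, and one checks that the center directions coincide exactly with the $O(r)$ orbit. Second, I must handle the complex Jordan structure carefully: $\Lambda_r$ and $\Lambda_{\perp}$ need not be diagonal, the columns of $\Psi$ are complex, and the reality constraint $U=\Psi K\in\mathbb{R}^{n\times r}$ must be invoked both to match the $r(n-r)$ real transverse dimensions and, in the unstable case, to extract a genuine real growing mode from a conjugate pair of right-half-plane eigenvalues. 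The spectral gap $\mathrm{Re}(\lambda_r)>\mathrm{Re}(\lambda_{r+1})$ is precisely what both block-diagonalizes $\Lambda$ at the partition and keeps all transverse eigenvalues strictly off the imaginary axis.
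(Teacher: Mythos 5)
Your proposal follows essentially the same route as the paper's proof: pass to $K=\Psi^{-1}U$ coordinates, linearize the transverse $K_{\perp}$-dynamics about the equilibrium, and read off the spectrum of the resulting Sylvester operator $\delta K_{\perp}\mapsto \Lambda_{\perp}\delta K_{\perp}-\delta K_{\perp}\bar{Q}$ as eigenvalue differences $\lambda_a(A)-\lambda_b(A)$, with the gap $\mathrm{Re}(\lambda_r)>\mathrm{Re}(\lambda_{r+1})$ giving contraction for $\mathcal{U}$. Your treatment is in fact somewhat more explicit than the paper's where it matters: you exhibit a concrete growing transverse mode $\lambda_{j_0}-\lambda_{k_0}$ for $\mathcal{U}_P\neq\mathcal{U}$ (the paper only asserts that perturbed elements leave $\mathcal{U}_P$), and you correctly flag that the $O(r)$-orbit of neutral directions requires a normal-hyperbolicity reduction rather than a pointwise spectral argument, a step the paper also leaves implicit.
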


\begin{proof}
We consider the perturbations from the equilibrium point $\Psi P\begin{bsmallmatrix} K_r \\ O_{n-r,r} \end{bsmallmatrix}$ given by $\Psi P\begin{bsmallmatrix} \delta K_r (t) \\ \delta K_{\perp}(t) \end{bsmallmatrix}$. 
We define $\mathrm{block}\mbox{-}\mathrm{diag} [{\Lambda_{P,r}, \Lambda_{P,\perp}}] := P^{\top}\Lambda P$. 
{ As $\lim _{t\to \infty}\delta K_{\perp} (t) = O_{n-r,r}$ is sufficient for this statement, we focus only on $\frac{d}{dt}\delta K_{\perp} (t)$. }
Neglecting higher-order terms, the differential equation for $\delta K_{\perp}(t)$ is stated as follows:
\begin{align*}
\frac{d\delta K_{\perp}(t)}{dt} 
=& {\Lambda_{P,\perp}}\delta K_{\perp}(t) - \delta K_{\perp}(t)K_r^{-1}{\Lambda_{P,r}} K_r.    
\end{align*}
The vectorization of the matrices yields 
\begin{align*}
& \frac{d\mathrm{vec}(\delta K_{\perp}(t))}{dt} \\ 
=& (I_r\otimes {\Lambda_{P, \perp}} -(K_r^{\top} {\Lambda_{P,r}^{\top}} K_{r}^{-\top})\otimes I_{n-r})\mathrm{vec}(\delta K_{\perp}(t)),
\end{align*}
where $\otimes$ denotes the Kronecker product. 
{If $P=I_{n}$, $\mathrm{Re}(\lambda _{1}(I_r\otimes\Lambda_{\perp }-(K_r^{\top} \Lambda_{r}^{\top} K_{r}^{-\top})\otimes I_{n-r})) = \mathrm{Re}(\lambda_{r+1}(A)) - \mathrm{Re}(\lambda_{r}(A)) <0 $; therefore, }
$\delta K_{\perp}(t)$ converges to $O_{n-r,r}$, and consequently, $K_{r} + \delta K_{r}(t)$ converges to $K_{r}W$, where $W \in \mathbb{R}^{r\times r}$ denotes an orthogonal matrix. 
Therefore, $\mathcal{U}$ is asymptotically stable. 
In contrast, any element in $\mathcal{U}_{P} \neq \mathcal{U}$ leaves $\mathcal{U}_{P}$ {when it is perturbed}. Therefore, any set $\mathcal{U}_{P} \neq \mathcal{U}$ is unstable. 
\end{proof}

Theorem \ref{thm:equili_localconv} suggests that if there is no other stable invariant set such as limit cycle, any solution $U(t)$ of Eq. \eqref{eq_U} from a nonequilibrium point converges to $\mathcal{U}$; therefore, investigating $\mathcal{U}$ is crucial to understanding the properties of the LRKF. 

As Theorem \ref{thm:equili_localconv} only ensures local stability, establishing the domain of the attraction of $\mathcal{U}$ is expected. 
If we consider $(n,r)=(2,1)$, the global behavior of the Oja flow is stated as follows. 

\begin{prop}\label{prop:equili_globalconvn=2}
Suppose $(n,r)=(2,1)$. If $K(0) \neq \begin{bsmallmatrix} 0 ,& \pm \Sigma _{22}^{-1/2} \end{bsmallmatrix}^{\top}$, then the solution ${U(t)=}\Psi K(t)$ of {\eqref{eq_U}} converges to $\mathcal{U}=\{ \pm \Sigma _{11}^{-1/2} \psi _{1}(A)  \}$.
\end{prop}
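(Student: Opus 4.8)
The plan is to reduce the Oja flow to a scalar equation for the ratio of the two coordinates of $K(t)$. First I would observe that the spectral-gap hypothesis $\mathrm{Re}(\lambda_1(A)) > \mathrm{Re}(\lambda_2(A))$ forces both eigenvalues of $A \in \mathbb{R}^{2\times 2}$ to be real and distinct, since a complex-conjugate pair would share a common real part; hence $\Psi$ is real, $\Lambda = \mathrm{diag}(\lambda_1,\lambda_2)$, and $\Sigma = \Psi^{\top}\Psi > 0$. Substituting $U = \Psi K$ into \eqref{eq_U} (with $\varepsilon = 1$), left-multiplying by $\Psi^{-1}$, and using $\Psi^{-1}A\Psi = \Lambda$ together with $\Psi^{\top}A\Psi = \Sigma\Lambda$, I obtain the closed dynamics
\[ \frac{dK}{dt} = \Lambda K - \mu(K)\,K, \qquad \mu(K) := K^{\top}\Sigma\Lambda K \in \mathbb{R}, \]
that is, $\dot k_i = (\lambda_i - \mu(K))\,k_i$ for $i = 1,2$ with $K = [k_1, k_2]^{\top}$. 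The decisive structural feature is that the nonlinear term $\mu(K)K$ is radial (parallel to $K$), so it must cancel in any scale-invariant quantity.

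The core step is to set $\rho(t) := k_2(t)/k_1(t)$ where $k_1 \neq 0$ and to compute, from the componentwise equations, that $\mu$ cancels and
\[ \frac{d\rho}{dt} = (\lambda_2 - \lambda_1)\,\rho. \]
Because $\lambda_1 > \lambda_2$, this is a linear scalar equation with $\rho(t) = \rho(0)\,e^{(\lambda_2 - \lambda_1)t} \to 0$. To license this for all $t \ge 0$ I would first note that the multiplicative form $\dot k_1 = (\lambda_1 - \mu)k_1$ forbids $k_1$ from vanishing or changing sign: if $k_1(0) \neq 0$, then $k_1(t) \neq 0$ for all $t$ and $\rho$ is well defined throughout. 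The hypothesis supplies exactly this: on the invariant constraint ellipse $K^{\top}\Sigma K = 1$ (preserved because $U(t) \in \mathrm{St}(1,n)$), the locus $k_1 = 0$ is precisely $K = [0, \pm\Sigma_{22}^{-1/2}]^{\top}$, the excluded initial data.

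It then remains to convert $\rho \to 0$ into convergence of $U$. Writing $k_2 = \rho k_1$ in the constraint gives $k_1^2(\Sigma_{11} + 2\Sigma_{12}\rho + \Sigma_{22}\rho^2) = 1$, so $k_1^2 \to \Sigma_{11}^{-1}$ as $\rho \to 0$; since $k_1$ retains its initial sign, $k_1 \to \pm\Sigma_{11}^{-1/2}$ and $k_2 = \rho k_1 \to 0$. Hence $K(t) \to [\pm\Sigma_{11}^{-1/2}, 0]^{\top}$ and $U(t) = \Psi K(t) \to \pm\Sigma_{11}^{-1/2}\bm{\psi}_1(A)$, which is the asserted convergence $U(t) \to \mathcal{U}$.

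I expect the only genuine obstacle to be the nonlinearity $\mu(K)$, which a priori couples the coordinates and could conceivably permit periodic orbits on the circle $\{K^{\top}\Sigma K = 1\}$; the resolution is the radial nature of $\mu(K)K$, which removes it entirely from the angular (ratio) dynamics and leaves an exactly solvable linear equation. The secondary point requiring care is the non-vanishing of $k_1(t)$, which the multiplicative structure of $\dot k_1$ guarantees and which is precisely where the excluded points $[0,\pm\Sigma_{22}^{-1/2}]^{\top}$ enter.
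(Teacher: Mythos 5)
Your proof is correct, but it takes a genuinely different route from the paper's. The paper writes the reduced flow as $\dot K=(I_2-KK^{\top}\Sigma)\Lambda K$, introduces the quantity $\gamma(K_r,K_\perp)=1+\Sigma_{22}K_\perp^2-\Sigma_{11}K_r^2$, and runs a qualitative four-case analysis on the sign and size of $\gamma$ to argue that the trajectory passes through the cases in order and that $|K_\perp|$ ultimately decreases to $0$; the convergence of $K_r$ is then read off from the constraint ellipse. You instead exploit the fact that the nonlinearity $\mu(K)K$ with $\mu(K)=K^{\top}\Sigma\Lambda K$ is radial, so the ratio $\rho=k_2/k_1$ obeys the exactly solvable linear equation $\dot\rho=(\lambda_2-\lambda_1)\rho$; non-vanishing of $k_1$ follows from the multiplicative form of $\dot k_1$ (with $\mu(K(t))$ bounded on the compact invariant ellipse, so the exponential-integral representation of $k_1$ applies), and the excluded initial data $[0,\pm\Sigma_{22}^{-1/2}]^{\top}$ are exactly the locus $k_1=0$ on that ellipse. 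Your argument buys an explicit exponential rate for the angular variable and replaces the paper's somewhat informal monotonicity/transition reasoning (e.g.\ ``after case (iii) the equation does not return to case (ii)'') with a closed-form solution, so it is arguably the more rigorous of the two; the paper's phase-portrait style analysis, on the other hand, is the one that generalizes in spirit to the Lyapunov-type estimate used for general $(n,r)$ in Theorem \ref{thm:equili_globalconv}, where no scalar ratio variable is available. Both proofs rest on the same two preliminary observations (realness of the eigenvalues and of $\Psi$, and invariance of the ellipse $K^{\top}\Sigma K=1$), and both identify the same limit set.
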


\begin{proof}
For $A\in \mathbb{R}^{2\times 2}$, $\mathrm{Re}(\lambda _{1}(A)) > \mathrm{Re}(\lambda _{2}(A))$ implies $\lambda _{i}$ are real {and $\Psi \in \mathbb{R}^{2\times 2}$}. Therefore, $\Sigma = {\Psi ^{\top}\Psi } \in \mathbb{R}^{2\times 2}$. 
Here, $K(t)$ moves along an ellipse $\{ K \in \mathbb{R}^{2} \ | \ K^{\top} \Sigma K=1 \}$. 
Based on $K(t)^{\top}\Sigma K(t) = \Sigma _{11}K_{r}(t)^{2} + \Sigma _{22}K_{\perp}(t)^{2} + 2 \Sigma _{12} K_{r}(t) K_{\perp}(t)=1$, 
\begin{align}
\frac{d}{dt}K(t) 
=& {(I_{2} - K(t)K(t)^{\top} \Sigma ) \Lambda K(t) } \nonumber
\\
= & \frac{\delta \lambda}{2}
\left\{
\begin{bmatrix}
0
\\
-2 K_{\perp}(t)
\end{bmatrix}
+
\gamma (K_{r}(t),K_{\perp}(t))
\begin{bmatrix}
K_{r}(t)
\\
K_{\perp}(t)
\end{bmatrix}
\right\} , \label{eq:nr21_DE}
\end{align}
where $\delta \lambda := \lambda _{1}(A) - \lambda _{2}(A) $ and $\gamma (K_{r},K_{\perp}):=(1 + \Sigma _{22}K_{\perp}^{2} - \Sigma _{11}K_{r}^{2})$. Equation \eqref{eq:nr21_DE} contains four equilibrium points: $(\bar{K}_{r},\bar{K}_{\perp}) = (\pm \Sigma ^{-1/2}_{11},0)$ and $(0,\pm \Sigma ^{-1/2}_{22})$. Herein, we consider the following four cases: 
(i) If $\gamma (K_{r}(t),K_{\perp}(t)) >2$, then $|K_r(t)|$ and $|K_{\perp}(t)|$ increase.  
(ii) If $\gamma (K_{r}(t),K_{\perp}(t)) \in (0,2]$, then $|K_r(t)|$ increases and $|K_{\perp}(t)|$ decreases. 
(iii) If $\gamma (K_{r}(t),K_{\perp}(t)) =0$, then $K_{r}(t)$ does not change and $|K_{\perp}(t)|$ decreases or does not change if $K_{\perp}(t)=0$. 
(iv) If $\gamma (K_{r}(t),K_{\perp}(t)) <0$, then $|K_r(t)|$ and $|K_{\perp}(t)|$ decrease.  
In (i), the rate of increase of $|K_{r}(t)|$ is larger than that of $|K_{\perp}(t)|$. Then, Equation \eqref{eq:nr21_DE} enters case (ii) and becomes case (iii). After case (iii), Equation \eqref{eq:nr21_DE} does not return to case (ii); thus, $| K_{\perp}(t)|$ converges to $0$.  
Based on the constraint $K(t)^{\top}\Sigma K(t) =1$, this phenomenon implies that $K_{r}(t)$ converges to $\pm \Sigma _{11}^{-1/2}$. 
\end{proof}

Unlike Proposition \ref{prop:equili_globalconvn=2}, estimating the exact domain of attraction for a general $(n,r)$ is challenging. We provide sufficient conditions for the domain of attraction.  
Before proceeding, we introduce the following lemmas: 

\begin{lem} \label{lem:matrix_inequality}
For any matrix $X, Y \in \mathbb{C}^{m \times n}$ and any real number $\alpha > 0$, the following inequalities hold:
\begin{align*}
\pm (X^{\dagger}Y+Y^{\dagger}X) \leq \alpha X^{\dagger}X+\frac{1}{\alpha}Y^{\dagger}Y. 
\end{align*}   
\end{lem}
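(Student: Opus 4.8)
The plan is to obtain both sign versions at once from the elementary fact that any matrix of the form $Z^{\dagger}Z$ is positive semidefinite. First I would set $Z_{\pm} := \sqrt{\alpha}\,X \mp \frac{1}{\sqrt{\alpha}}\,Y \in \mathbb{C}^{m\times n}$, which is well defined precisely because $\alpha > 0$. Expanding the (automatically positive-semidefinite) Gram matrix $Z_{\pm}^{\dagger}Z_{\pm} \geq O_{n,n}$ and collecting terms gives $\alpha X^{\dagger}X + \frac{1}{\alpha}Y^{\dagger}Y \mp (X^{\dagger}Y + Y^{\dagger}X) \geq O_{n,n}$, which is exactly the claimed bound for the matching sign. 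Thus choosing the upper sign in $Z_{\pm}$ yields the $+$ version of the inequality and the lower sign yields the $-$ version, so a single completing-the-square computation covers both cases simultaneously.

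The only points that deserve a word of care are bookkeeping rather than substance: the relation $\leq$ is understood in the Löwner (positive-semidefinite) order on Hermitian matrices, and the cross term $X^{\dagger}Y + Y^{\dagger}X$ together with $\alpha X^{\dagger}X + \frac{1}{\alpha}Y^{\dagger}Y$ are manifestly Hermitian, so the comparison is meaningful. I do not anticipate any real obstacle here, since this is a standard Young-type inequality; the scalar $\alpha$ is precisely the free parameter that balances the two quadratic contributions against the cross term, and its arbitrariness is what will later allow the bound to be tuned when this lemma is applied to control the error-covariance dynamics. Verifying the expansion of $Z_{\pm}^{\dagger}Z_{\pm}$ is the entire content of the argument.
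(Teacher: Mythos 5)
Your proof is correct and is essentially the same argument as the paper's: the paper writes $\bigl[\begin{smallmatrix} X & Y\end{smallmatrix}\bigr]^{\dagger}\bigl[\begin{smallmatrix} X & Y\end{smallmatrix}\bigr]\geq 0$ and sandwiches it with $[\sqrt{\alpha}I_n,\ \pm\tfrac{1}{\sqrt{\alpha}}I_n]$, which is exactly your expansion of $Z_{\pm}^{\dagger}Z_{\pm}\geq 0$. No issues.
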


\begin{proof}
For any $X, Y\in \mathbb{C}^{m\times n}$, we obtain 
$\begin{bmatrix}
X & Y
\end{bmatrix}^{\dagger}
\begin{bmatrix}
X & Y
\end{bmatrix} \geq 0$.
By multiplying both sides of the inequality by $[\sqrt{\alpha} I_n, \ \pm \frac{1}{\sqrt{\alpha}}I_n ]$ and its transposition from the left and right sides, we obtain Lemma \ref{lem:matrix_inequality}. 
\end{proof}

\begin{lem}\label{lem:Schur_str}
For any $A \in \mathbb{R}^{n \times n}$, a unitary matrix $S \in \mathbb{C}^{n\times n}$ and upper triangle matrix $Q\in \mathbb{C}^{n\times n}$ exist such that $S^{\dagger}AS$ becomes an upper triangle and satisfies $S = \Psi Q$.  
\end{lem}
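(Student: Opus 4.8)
The plan is to obtain $S$ directly from a QR-type factorization of $\Psi$ and then verify that the required triangularity is inherited from the Jordan form $\Lambda$. Since $\Psi$ is invertible (its columns form a basis of generalized eigenvectors, and $\Lambda = \Psi^{-1} A \Psi$ is defined), Gram--Schmidt orthonormalization of its columns yields a factorization $\Psi = S R$, where $S \in \mathbb{C}^{n \times n}$ is unitary and $R \in \mathbb{C}^{n \times n}$ is upper triangular with nonzero diagonal entries, hence invertible.

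First I would set $Q := R^{-1}$. The inverse of an invertible upper triangular matrix is again upper triangular, so $Q$ has the required form, and by construction $S = \Psi R^{-1} = \Psi Q$, which settles the factorization constraint $S = \Psi Q$.

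Next I would check that $S^{\dagger} A S$ is upper triangular. Because $S$ is unitary, $S^{\dagger} = S^{-1} = R \Psi^{-1}$, so
\begin{align*}
S^{\dagger} A S = R \Psi^{-1} A \Psi R^{-1} = R \Lambda R^{-1}.
\end{align*}
Here $\Lambda$ is the Jordan form of $A$, which is upper triangular. Since $R$, $\Lambda$, and $R^{-1}$ are all upper triangular and the set of upper triangular matrices is closed under multiplication, the product $R \Lambda R^{-1}$ is upper triangular, which completes the argument.

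There is no serious obstacle; the only point requiring care is the observation that conjugating the upper triangular $\Lambda$ by the upper triangular $R$ preserves upper triangularity, i.e.\ that the QR factor $R$ and the eigenvector matrix $\Psi$ are compatible precisely because $\Lambda$ is already triangular. As a byproduct, the diagonal of $R \Lambda R^{-1}$ equals that of $\Lambda$, so the eigenvalues appear along the diagonal in the same order fixed by $\Psi$ (namely by nonincreasing real part); this ordering is not needed for the statement itself but may be convenient when the lemma is invoked later.
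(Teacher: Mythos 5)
Your proposal is correct and takes essentially the same route as the paper: QR-factorize $\Psi = SR$, set $Q = R^{-1}$, and note that $S^{\dagger}AS = R\Lambda R^{-1}$ is upper triangular because products and inverses of upper triangular matrices are upper triangular. You actually spell out the verification that $S^{\dagger}AS$ inherits triangularity from the Jordan form $\Lambda$, a step the paper's own proof leaves implicit.
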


\begin{proof}
Based on the QR decomposition of $\Psi$, $\Psi = S T$, where $T$ denotes the upper triangle and regular matrix. As $Q=T^{-1}$ is also an upper triangle, this statement holds. 
\end{proof}

If the matrix $A$ is decomposed using the unitary matrix $S$ satisfying Lemma \ref{lem:Schur_str}, we can consider $S^{\dagger}AS = \begin{bsmallmatrix} L_{11} && L_{12} \\ O_{n-r,r} && L_{22} \end{bsmallmatrix}$, where $L_{11} {\in \mathbb{C}^{r\times r}}$ denotes an upper triangular matrix with eigenvalues $\lambda_1(A), \dots, \lambda_r(A)$, $L_{22}{\in \mathbb{C}^{(n-r)\times (n-r)}}$ represents an upper triangular matrix with eigenvalues $\lambda_{r+1}(A), \dots, \lambda_n(A)$, and $L_{12} {\in \mathbb{C}^{r \times (n-r)}}$. 
Note that $\mathrm{St}(r, n) = \{S\begin{bsmallmatrix} F_1 \\ F_2 \end{bsmallmatrix} \in \mathbb{R}^{n\times r} \mid F_1 \in \mathbb{C}^{r \times r}, F_2 \in \mathbb{C}^{(n-r)\times r},\ F_{1}^{\dagger}F_1 +F_{2}^{\dagger}F_2 = I_r\}$. 

\begin{thm}\label{thm:equili_globalconv}
If ${\lambda}^{l_1}_r \coloneqq \lambda_r(L_{11} + L_{11}^{\dagger}) > {\lambda}^{l_2}_1 \coloneqq \lambda_{1}(L_{22} + L_{22}^{\dagger})$, and $U(0) \in \mathcal{V}_{\beta}:= \{S\begin{bsmallmatrix} F_1 \\ F_2 \end{bsmallmatrix} \in \mathrm{St}(r,n) \mid F_1 \in \mathbb{C}^{r \times r}, F_2 \in \mathbb{C}^{(n-r)\times r}, \ 0 \le F_{2}^{\dagger}F_2 < \beta I_r\}$, then the solution to Equation \eqref{eq_U} converges to an element of $\mathcal{U}$. Here, $\beta = ( 1+ 4 \ell _{\max} / ({\lambda}^{l_1}_r - {\lambda}^{l_2}_1)^2)^{-1} $ and $\ell_{\max}$ is the maximum eigenvalue of $L_{12}^{\dagger}L_{12}$.
\end{thm}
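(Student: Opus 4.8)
The plan is to use the Schur-type frame $S$ of Lemma \ref{lem:Schur_str} to put the flow in block coordinates and then collapse the matrix dynamics onto a single scalar comparison. Writing $U(t)=S\begin{bsmallmatrix}F_1(t)\\F_2(t)\end{bsmallmatrix}$ with $F_1^{\dagger}F_1+F_2^{\dagger}F_2=I_r$ and $\tilde U:=S^{\dagger}U$, I would substitute into \eqref{eq_U}; since $U$ is real, $U^{\top}AU=\tilde U^{\dagger}(S^{\dagger}AS)\tilde U=:M$, and with $S^{\dagger}AS=\begin{bsmallmatrix}L_{11}&L_{12}\\O_{n-r,r}&L_{22}\end{bsmallmatrix}$ the flow becomes $\dot{\tilde U}=(S^{\dagger}AS)\tilde U-\tilde U M$, that is $\dot F_1=L_{11}F_1+L_{12}F_2-F_1M$ and $\dot F_2=L_{22}F_2-F_2M$. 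The decisive step is the change of variable $Z:=F_2F_1^{-1}\in\mathbb{C}^{(n-r)\times r}$, which is well defined on $\mathcal{V}_{\beta}$ because there $F_1^{\dagger}F_1=I_r-F_2^{\dagger}F_2\succ(1-\beta)I_r\succ0$. A direct computation shows that the unknown, non-symmetric coupling $M$ cancels, leaving the autonomous matrix Riccati equation $\dot Z=L_{22}Z-ZL_{11}-ZL_{12}Z$. Convergence of $U$ to the dominant invariant subspace (the first $r$ columns of $S$) is then exactly $Z\to O_{n-r,r}$.

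Next I would translate the hypothesis $0\le F_2^{\dagger}F_2<\beta I_r$ into a bound on $Z$. Using the singular value decomposition of $F_2$ together with $F_1^{\dagger}F_1=I_r-F_2^{\dagger}F_2$, one checks that $ZZ^{\dagger}$ and $F_2^{\dagger}F_2$ have spectra matched through the increasing map $s\mapsto s/(1-s)$, so that $\|Z\|^2=\lambda_{\max}(F_2^{\dagger}F_2)/\bigl(1-\lambda_{\max}(F_2^{\dagger}F_2)\bigr)$. With $\delta:=\lambda^{l_1}_r-\lambda^{l_2}_1>0$ the stated value $\beta=(1+4\ell_{\max}/\delta^2)^{-1}$ satisfies $\beta/(1-\beta)=\delta^2/(4\ell_{\max})$, whence $\mathcal{V}_{\beta}$ is precisely $\{\,\|Z\|<\delta/(2\sqrt{\ell_{\max}})\,\}$, and $\|Z\|\to0$ forces $F_2\to O_{n-r,r}$. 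I would therefore take the Lyapunov function $\phi(t):=\|Z(t)\|^2=\lambda_{\max}(Z^{\dagger}Z)$ for the Riccati flow.

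The heart of the argument is to bound $\dot\phi$. Differentiating $Z^{\dagger}Z$ and testing against a unit top eigenvector $w$ (so $Zw=\sqrt{\phi}\,u$ with $\|u\|=1$), the three contributions become $(Zw)^{\dagger}(L_{22}+L_{22}^{\dagger})(Zw)\le\lambda^{l_2}_1\phi$, the quadratic form $-\phi\,w^{\dagger}(L_{11}+L_{11}^{\dagger})w\le-\lambda^{l_1}_r\phi$, and the cubic coupling $-2\phi^{3/2}\mathrm{Re}(w^{\dagger}L_{12}u)\le2\sqrt{\ell_{\max}}\,\phi^{3/2}$; it is exactly the symmetric parts $L_{11}+L_{11}^{\dagger}$ and $L_{22}+L_{22}^{\dagger}$ that appear, which is why the hypothesis is phrased through $\lambda^{l_1}_r$ and $\lambda^{l_2}_1$. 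Collecting these yields the scalar comparison $\dot\phi\le\phi\bigl(2\sqrt{\ell_{\max}}\sqrt{\phi}-\delta\bigr)$. On $\mathcal{V}_{\beta}$ one has $2\sqrt{\ell_{\max}}\sqrt{\phi}<\delta$, hence $\dot\phi<0$; this makes $\mathcal{V}_{\beta}$ forward invariant, keeps $F_1$ of full rank, and drives $\phi$ monotonically to $0$. Then $F_2\to O_{n-r,r}$, $\mathrm{range}\,U(t)$ tends to the dominant invariant subspace, and by the Stiefel constraint $\mathrm{dist}(U(t),\mathcal{U})\to0$; since on that subspace $(I_n-UU^{\top})AU=0$, the $\omega$-limit lies in $\mathcal{U}$, consistent with Theorem \ref{thm:equili_localconv}.

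I expect the main obstacle to be the non-normality of $A$: because $L_{11}$ and $L_{22}$ are only triangular, no clean matrix inequality $\tfrac{d}{dt}(Z^{\dagger}Z)\preceq-\delta\,Z^{\dagger}Z$ is available, which forces the operator-norm route above, and there $\phi=\lambda_{\max}(Z^{\dagger}Z)$ fails to be differentiable at eigenvalue crossings. This I would handle by reading the comparison as an upper Dini derivative, noting that the bound $w^{\dagger}\tfrac{d}{dt}(Z^{\dagger}Z)w\le\phi(2\sqrt{\ell_{\max}}\sqrt{\phi}-\delta)$ holds uniformly over every maximizing eigenvector $w$. A secondary point is global existence of $Z(t)$ (equivalently invertibility of $F_1(t)$ for all $t$), which follows a posteriori from the a priori bound $\phi(t)<\delta^2/(4\ell_{\max})$ ruling out finite-time escape. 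If one prefers a trace-type Lyapunov function, Lemma \ref{lem:matrix_inequality} gives an alternative way to dominate the cubic coupling, at the price of a more conservative region.
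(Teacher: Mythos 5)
Your proof is correct and arrives at the paper's exact threshold $\beta$ by a genuinely different route. The paper stays in the Stiefel coordinates $(F_1,F_2)$, takes $\mathrm{Tr}[F_2^{\dagger}F_2]$ as the Lyapunov function, bounds its derivative with the parametrized inequality of Lemma \ref{lem:matrix_inequality} (using $F_1^{\dagger}F_1=I_r-F_2^{\dagger}F_2$ to eliminate $F_1$), compares with a matrix Riccati ODE in $F_2^{\dagger}F_2$, and only at the end optimizes the free parameter $\alpha$ to get $\alpha=\delta\lambda/2$ and $\beta=\delta\lambda^2/(\delta\lambda^2+4\ell_{\max})$. You instead pass to the graph coordinate $Z=F_2F_1^{-1}$, where the coupling $M=U^{\top}AU$ cancels and the flow becomes the autonomous Riccati equation $\dot Z=L_{22}Z-ZL_{11}-ZL_{12}Z$; the operator-norm Lyapunov function $\phi=\lambda_{\max}(Z^{\dagger}Z)$ then yields $D^{+}\phi\le\phi(2\sqrt{\ell_{\max}}\sqrt{\phi}-\delta)$ with no parameter to tune, and since $Z^{\dagger}Z$ is similar to $F_2^{\dagger}F_2(I_r-F_2^{\dagger}F_2)^{-1}$ the identity $\beta/(1-\beta)=\delta^2/(4\ell_{\max})$ shows your basin $\{\|Z\|<\delta/(2\sqrt{\ell_{\max}})\}$ coincides exactly with $\mathcal{V}_{\beta}$. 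What your route buys is autonomy of the reduced dynamics and a sharper, parameter-free scalar comparison; what it costs is the two technical points you already flag, namely nonsmoothness of $\lambda_{\max}$ (correctly handled via Dini derivatives) and persistence of invertibility of $F_1$, which indeed follows from $F_1^{\dagger}(I_r+Z^{\dagger}Z)F_1=I_r$, giving $\sigma_{\min}(F_1)^2\ge(1+\|Z\|^2)^{-1}>1-\beta>0$ on the forward-invariant region. The paper's route avoids both issues because the trace is smooth and no inversion is needed, at the price of the $\alpha$-optimization and a somewhat delicate trace-versus-matrix comparison step. One shared caveat: like the paper, you establish $F_2(t)\to O_{n-r,r}$, i.e., convergence to the set $\mathcal{U}$; convergence to a single element would additionally require controlling the residual rotation of $F_1(t)$, which neither argument does.
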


\begin{proof}
By using Lemma \ref{lem:Schur_str}, $U(t)$ can be expressed as $U(t) = \Psi Q 
\begin{bmatrix} F_{1}(t)^{\top} & F_{2}(t)^{\top} \end{bmatrix}^{\top} $. 
We demonstrate the initial domain is such that $\lim _{t\to \infty}F_{2}(t)=O_{n-r,r}$, implying $U(t)$ converges to $\mathcal{U}$.  
The differential equation of $\mathrm{Tr}[{F_{2}(t)}^{\dagger}F_{2}(t)]$ is
\begin{align*}
& \frac{d}{dt}\mathrm{Tr}[{F_{2}(t)}^{\dagger}F_{2}(t)]
\\
=& \mathrm{Tr}[{F_{2}(t)}^{\dagger}(L_{22} + L_{22}^{\dagger})F_{2}(t) \{ I_{r} - {F_{2}(t)}^{\dagger}F_{2}(t)\} ] \\
&-\mathrm{Tr}[{F_{2}(t)}^{\dagger}F_{2}(t) F_{1}(t)^{\dagger}(L_{11} + L_{11}^{\dagger})F_1(t)] \\
&-\mathrm{Tr}[{F_{2}(t)}^{\dagger}F_{2}(t) \{ {F_{1}(t)}^{\dagger}L_{12}{F_{2}(t)} + {F_{2}(t)}^{\dagger} L_{12}^{\dagger}{F_1(t)} \} ] .\\
\end{align*}
By applying Lemma \ref{lem:matrix_inequality} with $\alpha \in (0,\delta \lambda )$, where $\delta \lambda := {\lambda}^{l_1}_r-{\lambda}^{l_2}_1$, and ${F_1(t)}^{\dagger}{F_1(t)}= I_{r}-{F_{2}(t)}^{\dagger}F_{2}(t)$ to the aforementioned equation, the following inequality holds:
\begin{align*}
& \frac{d}{dt}\mathrm{Tr}[{F_{2}(t)}^{\dagger}F_{2}(t)] \notag \\
\leq & -\delta \lambda \mathrm{Tr}[{F_{2}(t)}^{\dagger}F_{2}(t) \{ I_{r} - {F_{2}(t)}^{\dagger}F_{2}(t)\} ]\notag \\
&+ \frac{1}{\alpha}\mathrm{Tr}[{F_{2}(t)}^{\dagger}F_{2}(t){F_{2}(t)}^{\dagger} L_{12}^{\dagger} L_{12} {F_{2}(t)}] \notag \\
&+ \alpha\mathrm{Tr}[{F_{2}(t)}^{\dagger}F_{2}(t) \{ I_{r} - {F_{2}(t)}^{\dagger}F_{2}(t)\} ] \notag 
\\ \leq &-(\delta \lambda-\alpha)\mathrm{Tr}[{F_{2}(t)}^{\dagger}F_{2}(t)] \notag\\
&+\left(\delta \lambda -\alpha +\frac{\ell_{\max}}{\alpha} \right) \mathrm{Tr}[ ( {F_{2}(t)}^{\dagger} F_{2}(t) )^2 ].
\end{align*}
We consider the following Riccati differential equation: 
\begin{align}
\frac{d}{dt} Z(t) 
= & -(\delta \lambda-\alpha)Z(t)
+\left( \delta \lambda -\alpha + \frac{\ell_{\max}}{\alpha} \right)
Z(t)^2 , \label{eq:Z_2}
\end{align}
where $Z(t)=Z(t)^{\dagger}\in \mathbb{C}^{r \times r}$ and $Z(0) = F_{2}(0)^{\dagger}F_{2}(0)$. The solution of Equation \eqref{eq:Z_2} converges to $O_{r,r}$ if $Z(0) < \frac{\alpha (\delta \lambda -\alpha)}{\alpha (\delta \lambda -\alpha) + \ell_{\max}} I_{r}$.  
Since $\mathrm{Tr}[F_{2}(t)^{\dagger} F_{2}(t)] \leq \mathrm{Tr}[Z(t)]$ for $t\geq 0$, if $F_{2}(0)^{\dagger} F_{2}(0) < \frac{\alpha (\delta \lambda -\alpha)}{\alpha (\delta \lambda -\alpha) + \ell_{\max}} I_{r}$, then $F_{2}(t)$ converges to $O_{n-r,r}$, implying that $U(t)$ converges to $\mathcal{U}$. 

Subsequently, we estimate the maximum domain of attraction. Let 
\begin{align*}
f(\alpha) := \frac{\alpha (\delta \lambda -\alpha)}{\alpha (\delta \lambda -\alpha) + \ell_{\max}}
, \ \alpha \in (0,\delta \lambda )
.
\end{align*}
Thereafter, $f$ is a concave, positive function over $(0,\delta \lambda )$ and is 
maximum when $\alpha _{\max} = \delta \lambda /2$.  
Thus, $f(\alpha _{\max}) = \frac{\delta \lambda ^2}{\delta \lambda ^2 + 4 \ell _{\max}} = \beta$, thereby completes the proof. 
\end{proof}

By Bendixson’s inequality, ${\lambda}^{l_1}_r > {\lambda}^{l_2}_1$ implies $\mathrm{Re}(\lambda_r(A)) > \mathrm{Re}(\lambda_{r+1}(A))$. 
A prior numerical study \cite{yamada2021comparison} suggests that solution $U(t)$ to Eq. \eqref{eq_U} converges to $\mathcal{U}$ for almost initial matrices if $\mathrm{Re}(\lambda_r(A)) > \mathrm{Re}(\lambda_{r+1}(A))$ is satisfied, implying that the conditions in Theorem \ref{thm:equili_globalconv} can be relaxed.
Theorem \ref{thm:equili_globalconv} is similar to Proposition \ref{prop:equili_globalconvn=2} if $\ell_{\max} =0$, satisfied when $A$ is a normal matrix:

\begin{cor}\label{cor:equili_globalconv_normal}
Let $A$ denote a normal matrix. If $U(0) \in \mathcal{V}_{1}$, then the solution to Eq. \eqref{eq_U} converges to an element of $\mathcal{U}$.
 
\end{cor}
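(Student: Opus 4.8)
The plan is to obtain the corollary directly from Theorem \ref{thm:equili_globalconv} by showing that normality of $A$ forces $\ell_{\max}=0$, which collapses the parameter $\beta$ to $1$ and makes the eigenvalue hypothesis automatic. The whole argument reduces to identifying the constants appearing in Theorem \ref{thm:equili_globalconv} under the extra structural assumption.

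First I would argue that the off-diagonal block $L_{12}$ of the Schur decomposition from Lemma \ref{lem:Schur_str} vanishes. The unitary $S$ of that lemma brings $A$ to upper triangular form $T := S^{\dagger}AS = \begin{bsmallmatrix} L_{11} & L_{12} \\ O_{n-r,r} & L_{22} \end{bsmallmatrix}$. Since $A$ is normal and $S$ is unitary, $T$ is also normal, because $TT^{\dagger} = S^{\dagger}AA^{\dagger}S = S^{\dagger}A^{\dagger}AS = T^{\dagger}T$. An upper triangular normal matrix is necessarily diagonal, so $T$ is diagonal; in particular $L_{12} = O_{r,n-r}$, and the diagonal blocks $L_{11}, L_{22}$ are themselves diagonal with entries $\lambda_1(A),\dots,\lambda_r(A)$ and $\lambda_{r+1}(A),\dots,\lambda_n(A)$, respectively. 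Consequently $\ell_{\max}$, the largest eigenvalue of $L_{12}^{\dagger}L_{12}$, equals $0$, and the formula $\beta = (1 + 4\ell_{\max}/({\lambda}^{l_1}_r - {\lambda}^{l_2}_1)^2)^{-1}$ yields $\beta = 1$, so $\mathcal{V}_{\beta} = \mathcal{V}_1$.

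Next I would verify the gap condition ${\lambda}^{l_1}_r > {\lambda}^{l_2}_1$ required by Theorem \ref{thm:equili_globalconv}. Because $L_{11}$ is diagonal, the Hermitian matrix $L_{11}+L_{11}^{\dagger}$ is diagonal with entries $2\mathrm{Re}(\lambda_i(A))$, $i=1,\dots,r$; by the ordering convention its smallest eigenvalue is ${\lambda}^{l_1}_r = 2\mathrm{Re}(\lambda_r(A))$. Likewise ${\lambda}^{l_2}_1 = 2\mathrm{Re}(\lambda_{r+1}(A))$. The standing assumption $\mathrm{Re}(\lambda_r(A)) > \mathrm{Re}(\lambda_{r+1}(A))$ then gives ${\lambda}^{l_1}_r > {\lambda}^{l_2}_1$ at once.

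Finally, with $\beta = 1$ and the gap condition confirmed, Theorem \ref{thm:equili_globalconv} applies verbatim: every $U(0) \in \mathcal{V}_1$ generates a solution of Eq. \eqref{eq_U} converging to an element of $\mathcal{U}$. I do not expect a genuine obstacle; the only point needing care is justifying that normality forces $L_{12}$ to vanish for the particular decomposition $S = \Psi Q$ of Lemma \ref{lem:Schur_str}, rather than for some cleverly chosen $S$. This is settled by the general fact that an upper triangular normal matrix is diagonal, which holds for \emph{any} unitary $S$ producing an upper triangular Schur form, and hence for the specific $S$ of Lemma \ref{lem:Schur_str}.
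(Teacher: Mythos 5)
Your proposal is correct and follows essentially the same route as the paper: normality forces the Schur form $S^{\dagger}AS$ to be diagonal, so $\ell_{\max}=0$, $\beta=1$, and Theorem \ref{thm:equili_globalconv} applies directly. Your version merely spells out the standard fact that an upper triangular normal matrix is diagonal and explicitly checks the gap condition ${\lambda}^{l_1}_r > {\lambda}^{l_2}_1$ from the standing assumption $\mathrm{Re}(\lambda_r(A)) > \mathrm{Re}(\lambda_{r+1}(A))$, details the paper leaves implicit.
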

\begin{proof}
For a normal matrix $A\in \mathbb{R}^{n\times n}$, $S$ diagonalizes $A$. Therefore, $\ell _{max}=0$. This statement follows from Thm. \ref{thm:equili_globalconv}. 

\end{proof}

{The volume of $\mathrm{St}(r,n)$ can be precisely calculated \cite[Sec. 1.4.4]{Chikuse2003}. The same calculation reveals that $\mathcal{V}_{1}$ has the same volume as $\mathrm{St}(r,n)$, implying that almost all elements of $\mathrm{St}(r,n)$ are in $\mathcal{V}_{1}$. Thus, if $U(0)$ is generated from, e.g., the uniform distribution on $\mathrm{St}(r,n)$ \cite[Thm. 2.2.1]{Chikuse2003}, then $U(t)$ converges to $\mathcal{U}$ for almost all samples. 

Demonstrating the equality of two volumes exceeds the scope of this paper, but an illustrative example is provided instead. For $A = A^{\top} \in \mathbb{R}^{3\times 3}$ and $(n,r) = (3,1)$, $\mathcal{U}$ consists of the North and South poles of a unit sphere by choosing a suitable coordinate system.
$\mathcal{V}_{1}$ is then equivalent to the surface area of the sphere excluding the equator line; hence, $\mathcal{V}_{1}$ possesses the same volume as $\mathrm{St}(1,3)$.
However, for a general $A \in \mathbb{R}^{3\times 3}$, the unstable equilibrium points do not reside on the equator line, even if the North and South poles are stable equilibrium, presenting a challenge in precisely estimating the domain of attraction for $\mathcal{U}$. 
}

%%%%%%%%%%%%%%%%%%%%%%%%%%%%%%%%%%%%%%%%%%%%%%%%%%%%%%%%%%%%%%%%%%%%%%%
\section{Boundedness of estimation error}\label{subsec:bounded LKBF}
To ensure a bounded estimation error for the LRKB filter, we define the conditions for the steady-state solution of Eq. \eqref{eq_tildeV} as in \cite{yamada2021new} that addressed symmetric $A$.
{As the Oja flow can be computed offline for LTI systems, we assume $U(0) = \bar{U} \in \mathcal{U}$ throughout this section. }
Initially, we started the analysis by examining the controllability and observability using the following equality verified in Prop. \ref{prop:equili_str}:
\begin{align}
{\bar{U} \bar{U}^{\top} A \bar{U} = A \bar{U}\quad \forall \bar{U} \in \mathcal{U}.} \label{eq:UU_top_AU=AU}
\end{align}

\begin{prop}\label{prop:lrde_observe}
If $(C,A) { \in \mathbb{R}^{p\times n} \times \mathbb{R}^{n\times n}}$ is observable, then $(C_{\bar{U}},A_{\bar{U}}){ \in \mathbb{R}^{p\times r} \times \mathbb{R}^{r\times r}}$ is observable as well.
\end{prop}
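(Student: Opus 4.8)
The plan is to prove observability through the Popov--Belevitch--Hautus (PBH) eigenvector test rather than through the observability matrix or Gramian, because the excerpt permits a general real $A$ with nontrivial Jordan structure. Recall that $(C,A)$ is observable if and only if $\mathrm{rank}\begin{bsmallmatrix} \lambda I_n - A \\ C \end{bsmallmatrix} = n$ for every $\lambda \in \mathbb{C}$, equivalently there is no eigenvector $v$ of $A$ (for any eigenvalue) with $Cv = 0$. This eigenvector form of the test is exactly what makes the argument go through for a possibly defective $A$, since it never requires diagonalizability or generalized eigenvectors.

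The key algebraic ingredient is an intertwining relation between $A$ and its reduced counterpart $A_{\bar U} = \bar U^\top A \bar U$. Starting from the equilibrium identity \eqref{eq:UU_top_AU=AU}, namely $\bar U \bar U^\top A \bar U = A \bar U$, and substituting the definition of $A_{\bar U}$, I would immediately obtain
\[
A \bar U = \bar U A_{\bar U}.
\]
This states that $\bar U$ conjugates the reduced dynamics into the full-space dynamics, and it is the mechanism that transports spectral data of $A_{\bar U}$ back into the full space.

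Now let $\mu \in \mathbb{C}$ be any eigenvalue of $A_{\bar U}$ with a corresponding eigenvector $\xi \in \mathbb{C}^r$, $\xi \neq 0$, so $A_{\bar U}\xi = \mu\xi$. Applying the intertwining relation gives $A(\bar U\xi) = \bar U A_{\bar U}\xi = \mu(\bar U\xi)$. Because $\bar U \in \mathrm{St}(r,n)$ satisfies $\bar U^\top \bar U = I_r$, the map $\bar U$ is injective, hence $\bar U\xi \neq 0$, so $\bar U\xi$ is a genuine eigenvector of $A$ for the eigenvalue $\mu$. Observability of $(C,A)$ then forces $C(\bar U\xi) \neq 0$ by the PBH test, and since $C(\bar U\xi) = (C\bar U)\xi = C_{\bar U}\xi$, we conclude $C_{\bar U}\xi \neq 0$. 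As this holds for every eigenvector of $A_{\bar U}$, the PBH test certifies that $(C_{\bar U}, A_{\bar U})$ is observable.

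The step I expect to require the most care is the bookkeeping around the PBH test over $\mathbb{C}$, rather than any heavy computation: I must invoke the eigenvector form (not the generalized-eigenvector form) so the argument stays valid for defective $A$, and I must explicitly use injectivity of $\bar U$ to guarantee $\bar U\xi \neq 0$, so that $\mu$ is realized as an honest eigenvalue of $A$ with eigenvector lying in the range of $\bar U$. Proposition \ref{prop:UTAUeigen} already guarantees that the eigenvalues of $A_{\bar U}$ form a subset of those of $A$, which is consistent with this picture, but it is the intertwining relation $A\bar U = \bar U A_{\bar U}$ that actually carries the offending eigenvector back to the full space and lets the observability hypothesis on $(C,A)$ act on it.
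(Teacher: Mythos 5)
Your proposal is correct and follows essentially the same route as the paper's own proof: both arguments rest on the equilibrium identity $\bar U\bar U^{\top}A\bar U = A\bar U$ (equivalently your intertwining relation $A\bar U = \bar U A_{\bar U}$) to lift a PBH eigenvector of $A_{\bar U}$ annihilated by $C_{\bar U}$ to an eigenvector of $A$ annihilated by $C$, together with injectivity of $\bar U$ to ensure the lifted vector is nonzero. The paper merely phrases this as a proof by contradiction rather than your direct contrapositive form, so there is no substantive difference.
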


\begin{proof}
If $(C_{\bar{U}},A_{\bar{U}})$ is unobservable, then a nonzero vector $\bm{v}\in{\mathbb{C}^r}$ and $\lambda _{i}$ exist such that $\bar{U}^{\top}A\bar{U}\bm{v} =\lambda_{i}\bm{v}$, $C\bar{U}\bm{v}=0$. 
Based on Prop. \ref{prop:UTAUeigen}, $\lambda _{k} = \lambda_{k}(A)$, $k=1,\dots ,r$.  
From Eq. \eqref{eq:UU_top_AU=AU}, we can state 
\begin{align*}
\bar{U}\bar{U}^{\top} A \bar{U} \bm{v}=A\bar{U}\bm{v}= \lambda _{i} \bar{U} \bm{v},\quad C \bar{U} \bm{v}=0 .
\end{align*}
Since the column vectors of $\bar{U}$ are linearly independent, $\bar{U}\bm{v} \neq 0$. This result contradicts the observability of $(C,A)$. Therefore, the statement holds. 
\end{proof}

\begin{prop}\label{prop:lrde_reach}
If $(A,G){ \in \mathbb{R}^{n\times n} \times \mathbb{R}^{n\times n}}$ is controllable, then $(A_{\bar{U}},G_{\bar{U}}){ \in \mathbb{R}^{r\times r} \times \mathbb{R}^{r\times n}}$ is also controllable.
\end{prop}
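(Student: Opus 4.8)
The statement is the controllability dual of Proposition \ref{prop:lrde_observe}, so the plan is to mirror that proof using the Popov--Belevitch--Hautus (PBH) test phrased through left eigenvectors: $(A,G)$ is controllable if and only if there is no nonzero $\bm{z} \in \mathbb{C}^{n}$ and $\lambda \in \mathbb{C}$ with $\bm{z}^{\dagger} A = \lambda \bm{z}^{\dagger}$ and $\bm{z}^{\dagger} G = 0$, and likewise for $(A_{\bar{U}}, G_{\bar{U}})$. First I would argue by contradiction: assume $(A_{\bar{U}}, G_{\bar{U}})$ is uncontrollable, so there exist $\bm{w} \in \mathbb{C}^{r}\setminus\{0\}$ and $\lambda$ with $\bm{w}^{\dagger} A_{\bar{U}} = \lambda \bm{w}^{\dagger}$ and $\bm{w}^{\dagger} G_{\bar{U}} = 0$. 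By Proposition \ref{prop:UTAUeigen} the eigenvalue $\lambda$ is one of $\lambda_{1}(A),\dots,\lambda_{r}(A)$.

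The natural lift is $\bm{z} := \bar{U}\bm{w}$. Since $\bar{U}$ has full column rank, $\bm{z} \neq 0$, and because $\bar{U}$ is real, $\bm{z}^{\dagger} G = \bm{w}^{\dagger}\bar{U}^{\top} G = \bm{w}^{\dagger} G_{\bar{U}} = 0$, which already supplies one half of the PBH obstruction for $(A,G)$. It therefore remains to show that $\bm{z}$ is a genuine left eigenvector of $A$, i.e. $\bm{z}^{\dagger} A = \lambda \bm{z}^{\dagger}$ as row vectors in $\mathbb{C}^{1\times n}$; this would contradict the controllability of $(A,G)$ and close the argument.

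The crux---and the step I expect to be the main obstacle---is exactly this last upgrade. From $\bm{w}^{\dagger} A_{\bar{U}} = \lambda \bm{w}^{\dagger}$ and $\bar{U}^{\top}\bar{U} = I_{r}$ one only recovers the action of $\bm{z}^{\dagger} A$ after right multiplication by $\bar{U}$, namely $\bm{z}^{\dagger} A \bar{U} = \lambda \bm{z}^{\dagger}\bar{U}$; equivalently, the component $\bm{z}^{\dagger} A (I_{n} - \bar{U} \bar{U}^{\top})$ is left uncontrolled. Killing this component is equivalent to $A^{\top}\bar{U} \bm{w} \in \mathrm{col}(\bar{U})$, i.e. to the column space of $\bar{U}$ being $A^{\top}$-invariant. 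Note the asymmetry with the observability proof: there Eq. \eqref{eq:UU_top_AU=AU}, $\bar{U} \bar{U}^{\top} A \bar{U} = A \bar{U}$, expresses $A$-invariance of $\mathrm{col}(\bar{U})$ and powers a \emph{right}-eigenvector argument, whereas here its transpose $\bar{U}^{\top} A^{\top} = \bar{U}^{\top} A^{\top}\bar{U} \bar{U}^{\top}$ does \emph{not} yield $A^{\top}\bar{U} = \bar{U}(\,\cdot\,)$ in general.

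Consequently, the argument closes verbatim precisely when $\mathrm{col}(\bar{U})$ is reducing, i.e. invariant under both $A$ and $A^{\top}$---for instance whenever $A$ is normal (the setting of Corollary \ref{cor:equili_globalconv_normal}), where the transposed identity $\bar{U} \bar{U}^{\top} A^{\top}\bar{U} = A^{\top}\bar{U}$ holds and $\bm{z}$ becomes a bona fide left eigenvector of $A$. For a general non-normal $A$ the retained top-$r$ eigenspace need not be $A^{\top}$-invariant, so this lifting step can genuinely fail; closing the general case therefore seems to require either strengthening the hypotheses on $A$ or replacing the PBH lift by a reachability-subspace argument that tracks how the oblique geometry of the eigenbasis (encoded in the off-diagonal block $\Sigma_{12}$) couples $\mathrm{col}(\bar{U})$ to its complement. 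I would regard establishing---or correctly delimiting---this step as the heart of the proof of Proposition \ref{prop:lrde_reach}.
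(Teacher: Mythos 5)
Your diagnosis is correct, and the obstruction you isolate is not an artifact of your choice of method: by Eq.~\eqref{eq:UU_top_AU=AU} the column space of $\bar{U}$ is $A$-invariant but in general \emph{not} $A^{\top}$-invariant, so a left eigenvector of $A_{\bar{U}}$ does not lift to a left eigenvector of $A$, and controllability is genuinely not the dual of Proposition~\ref{prop:lrde_observe}. The paper's own proof takes the rank form of the PBH test rather than the eigenvector form: it computes $\bar{U}^{\top}\begin{bsmallmatrix}G & A-\lambda_i I_n\end{bsmallmatrix}\begin{bsmallmatrix}I_n & O_{n,r}\\ O_{n,n} & \bar{U}\end{bsmallmatrix}=\begin{bsmallmatrix}\bar{U}^{\top}G & A_{\bar{U}}-\lambda_i I_r\end{bsmallmatrix}$ and asserts the rank remains $r$ ``as $\bar{U}$ is full rank.'' That assertion is your gap in disguise: pre-multiplication by the wide matrix $\bar{U}^{\top}\in\mathbb{R}^{r\times n}$ is not rank-preserving, and it drops rank precisely when $\mathrm{col}\begin{bsmallmatrix}G & (A-\lambda_i I_n)\bar{U}\end{bsmallmatrix}$ meets $\mathrm{col}(\bar{U})^{\perp}$, i.e.\ when the input reaches the dominant invariant subspace only through the coupling block $L_{12}$ that the reduction discards. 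So the paper does not fill the hole you found; it steps over it.

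In fact the statement is false as written. Take $n=2$, $r=1$, $A=\begin{bsmallmatrix}1 & 1\\ 0 & 0\end{bsmallmatrix}$, $G=\begin{bsmallmatrix}0 & 0\\ 1 & 0\end{bsmallmatrix}$. Then $\mathrm{Re}(\lambda_1(A))=1>0=\mathrm{Re}(\lambda_2(A))$, the matrix $\begin{bsmallmatrix}G & AG\end{bsmallmatrix}$ has rank $2$ so $(A,G)$ is controllable, and $\mathcal{U}=\{\pm e_1\}$ since $e_1$ is the unit eigenvector of $\lambda_1=1$. But $G_{\bar{U}}=e_1^{\top}G=O_{1,2}$ and $A_{\bar{U}}=1$, so $(A_{\bar{U}},G_{\bar{U}})$ is uncontrollable and not even stabilizable (which also obstructs the subsequent application of Proposition~\ref{prop1} to Eq.~\eqref{eq_tildeR}). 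Your proposed delimitation is the right one: the lift $\bm{z}=\bar{U}\bm{w}$ becomes a genuine left eigenvector, and the proposition holds, exactly when $\mathrm{col}(\bar{U})$ is reducing (invariant under $A^{\top}$ as well), e.g.\ for normal $A$; for general $A$ one must add a hypothesis ensuring $\bar{U}^{\top}G$ still excites the modes of $L_{11}$, rather than hope to derive it from controllability of $(A,G)$ alone.
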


\begin{proof}
If $(A,G) $ is controllable, we can obtain $
\mathrm{rank}
\begin{bmatrix}
G & A-\lambda_i I_n
\end{bmatrix}
=n
$ for any $\lambda_i = \lambda_i(A)$, $i=1,\dots,n$. 
Subsequently, we consider the rank of the following matrix:
\begin{align*}
\bar{U}^{\top}
\begin{bmatrix}
G & A - \lambda_{i} I_n
\end{bmatrix}
\begin{bmatrix}
I_n & O_{n,r}\\
O_{n,n} & \bar{U}
\end{bmatrix}
.
\end{align*}
As $\bar{U}$ is full rank, the following equation holds:
\begin{align*}
\mathrm{rank}
\begin{bmatrix}
\bar{U}^{\top}G & \bar{U}^{\top} A \bar{U} - \lambda _{i} I_r
\end{bmatrix}
= r \quad \forall \lambda_{i},\ i=1,\dots ,n .
\end{align*}
This signifies that $(A_{\bar{U}},G_{\bar{U}})$ is controllable. Therefore, if $(A,G)$ is controllable, then $(A_{\bar{U}},G_{\bar{U}})$ is also controllable.
\end{proof}

\begin{rem}
If $(A, G, C)$ is stabilizable and detectable, then $(A_{\bar{U}}, G_{\bar{U}}, C_{\bar{U}})$ are also stabilizable and detectable. The proof adheres to the procedures outlined in Propositions \ref{prop:lrde_observe} and \ref{prop:lrde_reach}.
\end{rem}

If Propositions \ref{prop:lrde_observe} and \ref{prop:lrde_reach} are applicable, then Prop. \ref{prop1} can be applied to Eq. \eqref{eq_tildeR}. Consequently, $\tilde{R}(t) {\in \mathbb{R}^{r\times r}}$ converges to $\tilde{R}_{\bar{U}} > 0$ and $A_{\bar{U}} - \tilde{R}_{\bar{U}} C_{\bar{U}}^{\top} {(HH^{\top})}^{-1}C_{\bar{U}} {\in \mathbb{R}^{r\times r}}$ is stable.
However, $\tilde{R}(t)$ can converge to different solutions depending on the choice of $\bar{U}$. Even in such cases, the following proposition ensures the uniqueness of $\tilde{P}_{r}:= \lim _{t \to \infty} \tilde{P}(t) {\in \mathbb{R}^{n\times n}}$.

\begin{prop}\label{prop:lrde_unique}
Suppose $(A, G, C)$ is controllable and observable. 
Let $U_1, U_2 \in \mathcal{U}$. Subsequently, $U_1 \tilde{R}_{U_1} U_{1}^{\top}=U_2\tilde{R}_{U_2} U_{2}^{\top} = \tilde{P}_{r}$ holds. 
\end{prop}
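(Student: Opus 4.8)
The plan is to exploit the fact, established in Proposition \ref{prop:equili_str}, that any two elements of $\mathcal{U}$ differ only by a right orthogonal factor. Concretely, since $U_1, U_2 \in \mathcal{U}$, there exists an orthogonal matrix $W \in \mathbb{R}^{r\times r}$ with $W^\top W = I_r$ and $U_2 = U_1 W$. First I would record how the reduced system matrices transform under this relabeling. Using $A_{U} = U^\top A U$, $C_U = CU$, and $G_U = U^\top G$, a direct substitution gives $A_{U_2} = W^\top A_{U_1} W$, $C_{U_2} = C_{U_1} W$, and $G_{U_2} = W^\top G_{U_1}$, so that the reduced data for $U_2$ is an orthogonal conjugate of that for $U_1$. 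In particular $G_{U_2} G_{U_2}^\top = W^\top G_{U_1} G_{U_1}^\top W$ and $C_{U_2}^\top (HH^\top)^{-1} C_{U_2} = W^\top C_{U_1}^\top (HH^\top)^{-1} C_{U_1} W$.

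Next I would show that the steady-state Riccati solutions are related by the same conjugation, namely $\tilde{R}_{U_2} = W^\top \tilde{R}_{U_1} W$. To this end I substitute the candidate $W^\top \tilde{R}_{U_1} W$ into the algebraic Riccati equation associated with Eq. \eqref{eq_tildeR} for $U_2$. Using $W W^\top = I_r$ repeatedly, every term factors as $W^\top (\cdot) W$, and the bracketed quantity is precisely the algebraic Riccati equation solved by $\tilde{R}_{U_1}$ for $U_1$; hence the candidate is a positive definite solution of the $U_2$ Riccati equation. Because $(A, G, C)$ is controllable and observable, Propositions \ref{prop:lrde_observe} and \ref{prop:lrde_reach} guarantee that $(C_{U_2}, A_{U_2})$ is observable and $(A_{U_2}, G_{U_2})$ is controllable, so Proposition \ref{prop1} applies and the positive definite solution is unique. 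Therefore $\tilde{R}_{U_2} = W^\top \tilde{R}_{U_1} W$.

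Finally, I would substitute this identity into the definition of $\tilde{P}_r$ and use $W W^\top = I_r$ once more:
\[
U_2 \tilde{R}_{U_2} U_2^\top = (U_1 W)(W^\top \tilde{R}_{U_1} W)(U_1 W)^\top = U_1 \tilde{R}_{U_1} U_1^\top,
\]
which establishes the claim. The main obstacle is the uniqueness of the reduced steady-state solution: the conjugation computation only produces \emph{a} positive definite solution of the $U_2$ Riccati equation, and identifying it with the actual limit $\tilde{R}_{U_2}$ requires that the reduced pair $(A_{U_2}, G_{U_2}, C_{U_2})$ again be controllable and observable. This is exactly the role played by Propositions \ref{prop:lrde_observe} and \ref{prop:lrde_reach}, so the argument hinges on their preservation of controllability and observability rather than on any further computation.
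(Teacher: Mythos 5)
Your argument is correct: any two elements of $\mathcal{U}$ differ by a right orthogonal factor $W$, the reduced data transform by conjugation with $W$, the uniqueness of the positive definite ARE solution (guaranteed by Propositions \ref{prop:lrde_observe} and \ref{prop:lrde_reach} together with Proposition \ref{prop1}) forces $\tilde{R}_{U_2}=W^{\top}\tilde{R}_{U_1}W$, and the factors of $W$ cancel in $U_2\tilde{R}_{U_2}U_2^{\top}$. The paper does not spell out a proof here but defers to Prop.~4 of \cite{yamada2021new}; your write-up is the natural self-contained version of exactly that argument, so there is nothing to add.
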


The proof follows the proof of Prop. 4 in \cite{yamada2021new}.

Subsequently, we present a proposition concerning the stability of the estimation error system and provide the conditions for the boundedness of the estimation error.

\begin{prop}\label{prop:lrkf_bound}
{Let $\sigma_{i} = \lambda _{i}(A_{\bar{U}}-\tilde{R}_{\bar{U}}C_{\bar{U}}^{\top}{(HH^{\top})}^{-1}C_{\bar{U}})$ for $i=1,\dots,r$.  
}
Then the eigenvalues of $A-\tilde{P}_{r}{C}^{\top}{(HH^{\top})}^{-1}C {\in\mathbb{R}^{n\times n}}$ are $\sigma_1,\dots,\sigma_r$, $\lambda_{r+1}(A),\dots, \lambda_n(A)$.
\end{prop}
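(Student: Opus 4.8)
The plan is to exploit the fact, recorded in Eq.~\eqref{eq:UU_top_AU=AU}, that the column space of any $\bar{U} \in \mathcal{U}$ is $A$-invariant, and to block-triangularize the matrix of interest by an orthogonal change of basis adapted to $\bar{U}$. First I would substitute $\tilde{P}_r = \bar{U}\tilde{R}_{\bar{U}}\bar{U}^{\top}$ and use $\bar{U}^{\top}C^{\top} = C_{\bar{U}}^{\top}$ to write $M := A - \tilde{P}_r C^{\top}(HH^{\top})^{-1}C = A - \bar{U}\tilde{R}_{\bar{U}}C_{\bar{U}}^{\top}(HH^{\top})^{-1}C$, so that everything is phrased through the reduced data $A_{\bar{U}}$, $C_{\bar{U}}$, and $\tilde{R}_{\bar{U}}$.

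Next I would extend $\bar{U}$ to an orthogonal matrix $T := [\,\bar{U}\ \ \bar{U}_{\perp}\,] \in \mathbb{R}^{n\times n}$ with $\bar{U}_{\perp} \in \mathrm{St}(n-r,n)$ and $\bar{U}_{\perp}^{\top}\bar{U} = O_{n-r,r}$, and examine $T^{\top}MT$. The two key observations are: (a) by Eq.~\eqref{eq:UU_top_AU=AU} one has $A\bar{U} = \bar{U}A_{\bar{U}}$, hence $\bar{U}_{\perp}^{\top}A\bar{U} = \bar{U}_{\perp}^{\top}\bar{U}A_{\bar{U}} = O_{n-r,r}$, so $T^{\top}AT$ is block upper triangular with diagonal blocks $A_{\bar{U}}$ and $\bar{U}_{\perp}^{\top}A\bar{U}_{\perp}$; and (b) since $T^{\top}\bar{U} = [\,I_r\ \ O_{n-r,r}\,]^{\top}$, the bottom $n-r$ rows of $T^{\top}\bar{U}\tilde{R}_{\bar{U}}C_{\bar{U}}^{\top}(HH^{\top})^{-1}CT$ vanish. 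Combining (a) and (b), $T^{\top}MT$ is block upper triangular with diagonal blocks $A_{\bar{U}} - \tilde{R}_{\bar{U}}C_{\bar{U}}^{\top}(HH^{\top})^{-1}C_{\bar{U}}$ and $\bar{U}_{\perp}^{\top}A\bar{U}_{\perp}$.

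Finally, because a block upper triangular matrix has as its spectrum the union of the spectra of its diagonal blocks, and $T$ is orthogonal so $T^{\top}MT$ is similar to $M$, the eigenvalues of $M$ are $\sigma_1,\dots,\sigma_r$ (the spectrum of the upper-left block, by the definition of $\sigma_i$) together with the eigenvalues of $\bar{U}_{\perp}^{\top}A\bar{U}_{\perp}$. To identify the latter I would invoke Prop.~\ref{prop:UTAUeigen}: for $\bar{U}\in\mathcal{U}$ (the case $P=I_n$) the eigenvalues of $A_{\bar{U}}$ are $\lambda_1(A),\dots,\lambda_r(A)$; since the full spectrum of the block triangular $T^{\top}AT$ equals that of $A$, the complementary block $\bar{U}_{\perp}^{\top}A\bar{U}_{\perp}$ must carry exactly $\lambda_{r+1}(A),\dots,\lambda_n(A)$ as a multiset, yielding the claimed list. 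The only place requiring care is verifying the vanishing-block claims (a) and (b); both reduce to the invariance identity $A\bar{U}=\bar{U}A_{\bar{U}}$ and the orthonormality $\bar{U}_{\perp}^{\top}\bar{U}=O_{n-r,r}$, so I do not anticipate a substantive obstacle beyond this bookkeeping.
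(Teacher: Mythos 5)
Your proof is correct, and it reaches the conclusion by the same underlying mechanism as the paper---block upper triangularization of $A-\tilde{P}_{r}C^{\top}(HH^{\top})^{-1}C$ under a similarity adapted to the $A$-invariant subspace spanned by the columns of $\bar{U}$---but the realization differs enough to be worth comparing. The paper works in the complex Schur-type basis $S$ of Lemma~\ref{lem:Schur_str}, writes $\bar{U}=S\begin{bsmallmatrix}F_1\\ O_{n-r,r}\end{bsmallmatrix}$ with $F_1$ unitary, and reads the lower diagonal block off directly as $L_{22}$, whose eigenvalues are $\lambda_{r+1}(A),\dots,\lambda_n(A)$ by construction of the Schur form. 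You instead complete $\bar{U}$ to a real orthogonal $T=[\,\bar{U}\ \ \bar{U}_{\perp}\,]$, use the equilibrium identity \eqref{eq:UU_top_AU=AU} in the form $A\bar{U}=\bar{U}A_{\bar{U}}$ to annihilate the lower-left block of $T^{\top}AT$, and then need one extra step the paper avoids: identifying the spectrum of $\bar{U}_{\perp}^{\top}A\bar{U}_{\perp}$ as the complementary multiset $\{\lambda_{r+1}(A),\dots,\lambda_{n}(A)\}$ via Prop.~\ref{prop:UTAUeigen} (with $P=I_n$) together with the fact that the block-triangular $T^{\top}AT$ carries the full spectrum of $A$. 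Both of your vanishing-block claims check out: (a) follows from $\bar{U}_{\perp}^{\top}\bar{U}=O_{n-r,r}$ and the invariance identity, and (b) from $T^{\top}\bar{U}=\begin{bsmallmatrix}I_r\\ O_{n-r,r}\end{bsmallmatrix}$, so the diagonal blocks are exactly $A_{\bar{U}}-\tilde{R}_{\bar{U}}C_{\bar{U}}^{\top}(HH^{\top})^{-1}C_{\bar{U}}$ and $\bar{U}_{\perp}^{\top}A\bar{U}_{\perp}$ as you claim. What your route buys is that it stays entirely in real arithmetic and requires no Schur decomposition; what the paper's route buys is that the lower block and its eigenvalues are immediate from Lemma~\ref{lem:Schur_str}. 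There is no gap in your argument.
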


\begin{proof}
{From Lemma \ref{lem:Schur_str}, there exists a unitary $S \in \mathbb{C}^{n\times n}$ such that $S^{\dagger}AS = \begin{bsmallmatrix} L_{11} & L_{12} \\ O_{n-r,r} & L_{22}\end{bsmallmatrix}$, where $L_{11} \in \mathbb{C}^{r\times r}$ with the eigenvalues $\lambda _{1}(A),\dots , \lambda _{r}(A)$, $L_{22} \in \mathbb{C}^{(n-r)\times (n-r)}$ with the eigenvalues $\lambda _{r+1}(A),\dots , \lambda _{n}(A)$, and $L_{12}\in \mathbb{C}^{r\times (n-r)}$. Note that $\bar{U} \in \mathcal{U} = \{ S \begin{bsmallmatrix} F_{1} \\ O_{n-r,r}\end{bsmallmatrix} \in \mathrm{St}(r,n) \ | \  F_{1}\in \mathbb{C}^{r\times r},\  F_{1}^{\dagger}F_{1}=I_{r} \} $. Using the change of the coordinate system by $S$,
\begin{align*}
& S^{\dagger} (A -\tilde{P}_{r} C^{\top}{(HH^{\top})}^{-1}C )  S
\\ =& 
S^{\dagger} AS - S^{\dagger}\bar{U} \tilde{R}_{\bar{U}} C_{\bar{U}}^{\top}{(HH^{\top})}^{-1}C_{\bar{U}} \bar{U} ^{\top}  S
\\ & 
- S^{\dagger} \bar{U} \tilde{R}_{\bar{U}} C_{\bar{U}}^{\top}{(HH^{\top})}^{-1}C(I_{n} - \bar{U}\bar{U} ^{\top})  S
\\ = &
\begin{bmatrix}
X & Y \\ O_{r,n-r} & L_{22}
\end{bmatrix}
\end{align*}
holds, where $X:=L_{11} - F_{1} \tilde{R}_{\bar{U}} C_{\bar{U}}^{\top}{(HH^{\top})}^{-1}C_{\bar{U}} F_{1}^{\dagger} \in \mathbb{C}^{r\times r}$, $Y \in \mathbb{C}^{r\times (n-r)}$, and $F_{1} \in \mathbb{C}^{r\times r}$ is a unitary matrix.  Since $A_{\bar{U}} = \bar{U}^{\top}A\bar{U} = F_{1}^{\dagger} L_{11} F_{1}$, $X = F_{1} (A_{\bar{U}}-\tilde{R}_{\bar{U}}C_{\bar{U}}^{\top}{(HH^{\top})}^{-1}C_{\bar{U}}) F_{1}^{\dagger}$. Because the unitary transformation preserves the eigenvalues, $X$ has the eigenvalues $\sigma _{1},\dots ,\sigma _{r}$, implying $A -\tilde{P}_{r} C^{\top}{(HH^{\top})}^{-1}C $ has the eigenvalues $\sigma _{1},\dots ,\sigma _{r}$ and $\lambda _{r+1}(A),\dots \lambda _{n}(A)$. 
}
\end{proof}

From Proposition \ref{prop:lrkf_bound}, we derive the condition for the necessary and sufficient rank $r$ for the bounded estimation error using the LRKB filter.

\begin{thm}\label{thm:lrkf_rankcondi}
Suppose that system $(A, G, C)$ is controllable and observable, and $A$ has $r^{\prime} $ unstable eigenvalues. Then, $A-\tilde{P}_{r}C^{\top}{(HH^{\top})}^{-1}C$ is stable iff $r\ge r^{\prime}$.  Furthermore, the error covariance matrix $\tilde{V}(t)$ is bounded and converges to a steady-state solution if $A-\tilde{P}_{r}C^{\top}{(HH^{\top})}^{-1}C$ is stable.
\end{thm}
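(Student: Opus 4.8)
The plan is to prove the two claims of Theorem~\ref{thm:lrkf_rankcondi} in sequence, leaning heavily on Proposition~\ref{prop:lrkf_bound}, which already tells us the complete eigenvalue spectrum of the closed-loop matrix $A-\tilde{P}_{r}C^{\top}(HH^{\top})^{-1}C$: namely $\sigma_1,\dots,\sigma_r$ together with $\lambda_{r+1}(A),\dots,\lambda_n(A)$. For the stability characterization, first I would observe that Propositions~\ref{prop:lrde_observe} and~\ref{prop:lrde_reach} guarantee that $(C_{\bar U},A_{\bar U})$ is observable and $(A_{\bar U},G_{\bar U})$ is controllable whenever $(A,G,C)$ is; hence by Proposition~\ref{prop1} applied to the reduced Riccati equation~\eqref{eq_tildeR}, the reduced closed-loop matrix $A_{\bar U}-\tilde R_{\bar U}C_{\bar U}^{\top}(HH^{\top})^{-1}C_{\bar U}$ is stable, so every $\sigma_i$ satisfies $\mathrm{Re}(\sigma_i)<0$. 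The stability of the full matrix therefore reduces entirely to the signs of the remaining eigenvalues $\lambda_{r+1}(A),\dots,\lambda_n(A)$.

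For the ``iff'' I would argue both directions through the ordering convention $\mathrm{Re}(\lambda_1(A))\ge\dots\ge\mathrm{Re}(\lambda_n(A))$. If $r\ge r'$, then since $A$ has exactly $r'$ unstable eigenvalues (i.e.\ with nonnegative real part under the paper's convention, or strictly positive — I would state this precisely), all of $\lambda_{r+1}(A),\dots,\lambda_n(A)$ lie strictly in the left half-plane because they are among the $n-r\le n-r'$ eigenvalues of smallest real part; combined with $\mathrm{Re}(\sigma_i)<0$, the whole spectrum is in the open left half-plane, so the matrix is stable. Conversely, if $r<r'$, then at least one unstable eigenvalue, specifically $\lambda_{r+1}(A)$, survives in the spectrum of the closed-loop matrix with $\mathrm{Re}(\lambda_{r+1}(A))\ge 0$, so the matrix cannot be stable. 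This gives the equivalence.

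For the boundedness claim, I would treat~\eqref{eq_tildeV} as a linear (in $\tilde V$) matrix differential Lyapunov equation of the form $\dot{\tilde V}=M\tilde V+\tilde V M^{\top}+N$, where $M:=A-\tilde{P}_{r}C^{\top}(HH^{\top})^{-1}C$ and $N:=GG^{\top}+\tilde{P}_{r}C^{\top}(HH^{\top})^{-1}C\tilde{P}_{r}\ge 0$ is constant once $\tilde P(t)$ has reached its steady value $\tilde P_r$. Using the $\mathrm{vec}$ representation, $\frac{d}{dt}\mathrm{vec}(\tilde V)=(I_n\otimes M+M\otimes I_n)\mathrm{vec}(\tilde V)+\mathrm{vec}(N)$; when $M$ is stable the coefficient matrix $I_n\otimes M+M\otimes I_n$ has eigenvalues $\sigma_i(M)+\overline{\sigma_j(M)}$ all with negative real part, so this affine linear system has a unique equilibrium to which every solution converges, giving boundedness and convergence of $\tilde V(t)$ to the steady-state solution of the algebraic Lyapunov equation $M\tilde V_\infty+\tilde V_\infty M^{\top}+N=0$.

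The main obstacle I anticipate is handling the transient in~\eqref{eq_tildeV} rigorously: the coefficient matrix is not exactly $M$ for all $t$ but $A-\tilde P(t)C^{\top}(HH^{\top})^{-1}C$, which only converges to $M$ as $\tilde R(t)\to\tilde R_{\bar U}$; the assumption $U(0)=\bar U\in\mathcal U$ made at the start of the section freezes $U$ and hence $\tilde P(t)=\bar U\tilde R(t)\bar U^{\top}$, but one must still justify that the exponential convergence of $\tilde R(t)$ to $\tilde R_{\bar U}$ (guaranteed by Proposition~\ref{prop1}) together with the eventual stability of the frozen matrix yields boundedness for the time-varying Lyapunov recursion. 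I would invoke a standard perturbation/comparison argument: since $M$ is Hurwitz, the state-transition operator of the time-varying system is eventually a small perturbation of the exponentially stable constant-coefficient one, and a Gr\"onwall-type bound controls the transient, so $\tilde V(t)$ stays bounded and inherits the limit of the autonomous system.
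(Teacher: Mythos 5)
Your proposal is correct and follows essentially the same route as the paper: the stability equivalence is read off from the spectrum given by Proposition~\ref{prop:lrkf_bound} combined with Propositions~\ref{prop1}, \ref{prop:lrde_observe}, and \ref{prop:lrde_reach} (which force $\mathrm{Re}(\sigma_i)<0$), and the boundedness of $\tilde V(t)$ is obtained from the time-varying Lyapunov equation whose coefficient matrix converges to the Hurwitz matrix $A-\tilde P_r C^{\top}(HH^{\top})^{-1}C$. Your treatment of the transient (vectorization plus a Gr\"onwall/perturbation bound) is more explicit than the paper's appeal to ``the standard argument,'' but it is the same idea.
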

\begin{proof}
Based on Propositions \ref{prop1}, \ref{prop:lrde_observe}, and \ref{prop:lrde_reach},
$\sigma_1,\dots,\sigma_r$ are stable eigenvalues. Thus, from Prop. \ref{prop:lrkf_bound}, if we select $r \geq r^{\prime}$, then $A-\tilde{P}_{r}C^{\top}{(HH^{\top})}^{-1}C$ is stable. Conversely, if $r<r^{\prime}$, then $A-\tilde{P}_{r}C^{\top}{(HH^{\top})}^{-1}C$ is unstable. 

According to the standard argument, the solution $\tilde{R}(t)$ of Eq. \eqref{eq_tildeR} is bounded, which directly implies the boundedness of $\tilde{P}(t)=\bar{U}\tilde{R}(t) \bar{U}^{\top} \in \mathbb{R}^{n\times n}$.  
Since $\tilde{V}(t)$ indicates the solution of Eq. \eqref{eq_tildeV} with time-varying, bounded coefficient matrices, $\tilde{V}(t)$ is a continuous solution. 
Furthermore, as $\lim _{t\to \infty}P(t) = P_{r}$, $A-\tilde{P}(t)C^{\top}{(HH^{\top})}^{-1}C$ becomes stable, which signifies that $\tilde{V}(t)$ converges to a positive semidefinite matrix. As observed in the continuity of $\tilde{V}(t)$ in time $t\geq 0$, $\sup _{t\geq 0}\tilde{V}(t)$ is bounded.  
\end{proof}

From Theorems \ref{thm:equili_localconv} and \ref{thm:lrkf_rankcondi}, we conclude that selecting $r$ larger than the number of unstable eigenvalues of $A$ is crucial for achieving bounded estimation. The number of unstable eigenvalues can be checked, for example, using the Routh table. However, finding a suitable rank for linear time-varying or nonlinear systems remains a challenge.

\section{Conclusion}\label{sec:Conclu}
In this study, we analyzed the properties of the Oja flow and a modified LRKB filter with a general matrix. We removed the restriction on symmetric matrices and investigated the Oja flow for general matrices. Based on the results, we revealed the rank $r$ condition for stable estimation. 

This paper focused on continuous-time LTI systems. In the future, the applicability of the modified LRKB filter with bounded estimation errors should be analyzed for discrete-time LTI, linear time-varying, and nonlinear systems.

\addtolength{\textheight}{-12cm}   % This command serves to balance the column lengths
                                  % on the last page of the document manually. It shortens
                                  % the textheight of the last page by a suitable amount.
                                  % This command does not take effect until the next page
                                  % so it should come on the page before the last. Make
                                  % sure that you do not shorten the textheight too much.

%%%%%%%%%%%%%%%%%%%%%%%%%%%%%%%%%%%%%%%%%%%%%%%%%%%%%%%%%%%%%%%%%%%%%%%%%%%%%%%%

%%%%%%%%%%%%%%%%%%%%%%%%%%%%%%%%%%%%%%%%%%%%%%%%%%%%%%%%%%%%%%%%%%%%%%%%%%%%%%%%

%%%%%%%%%%%%%%%%%%%%%%%%%%%%%%%%%%%%%%%%%%%%%%%%%%%%%%%%%%%%%%%%%%%%%%%%%%%%%%%%

%\bibliographystyle{ieeetr}
%\bibliography{reference.bib}

\end{document}